\newcommand{\eN}{{\ensuremath{\mathbb N}}}
\newcommand{\Ex}{{\ensuremath{\mathbb E}}}
\newcommand{\Pro}{{\ensuremath{\mathbb P}}}
\newcommand{\eR}{{\ensuremath{\mathbb R}}}
\newcommand{\lv}{\left\lVert}
\newcommand{\rv}{\right\rVert}
\newcommand{\1}{\mathds{1}}
\newcommand{\eps}{\varepsilon}
\newcommand{\wym}{r}
\newcommand{\parr}{\log_2 \kappa}
\newcommand{\odpar}{\frac{1}{\parr}}
\newcommand{\Slev}{SRV($\kappa$) }
\newcommand{\XX}{\mathcal{X}}
\newcommand{\XXb}{\bar{\mathcal{X}}}
\newcommand{\Ik}{\mathcal{I}(k)}
\newcommand{\Il}{\mathcal{I}(l)}
\newcommand{\Ikc}{\mathcal{I}_4(k)}
\newcommand{\Ilc}{\mathcal{I}_4(l)}
\newcommand{\Xk}{X^{(k)}}
\newcommand{\Xd}{X^{(\wym+1)}}
\newcommand{\Xld}{X^{(\leq \wym)}}
\newcommand{\mia}{\max_i \lv (a_{ij})_j \rv_2}
\newcommand{\mja}{\max_j \lv (a_{ij})_i \rv_2}
\newcommand{\RX}{R_X(A)}
\newcommand{\AT}{ (a_{ij}t_{ij})_{i,j\leq n} }
\newcommand{\RTX}{ \sup_{t\in T} R_X\big((a_{ij}t_{ij})_{i,j \leq n}\big) }
\newcommand{\MTI}{\max_i \sup_{t\in T} \lv (a_{ij} t_{ij})_{j} \rv_2 }
\newcommand{\MTJ}{\max_j \sup_{t\in T} \lv (a_{ij}t_{ij})_i \rv_2}
\newcommand{\Log}{\mathit{Log}}
\newcommand{\rown}{\approx}
\newcommand{\AX}{(a_{ij}X_{ij})_{i,j\leq n}}
\newtheorem{theorem}[subsection]{Theorem}
\newtheorem{lemma}[subsection]{Lemma}
\newtheorem{fact}[subsection]{Fact}
\newtheorem{corollary}[subsection]{Corollary}
\newtheorem{remark}[subsection]{Remark}
\providecommand{\keywords}[1]{\textbf{\textit{Keywords: }} #1}
\providecommand{\klas}[1]{\textbf{\textit{AMS MSC 2010: }} #1}
\begin{document}
\author{Rafa{\l} Meller\footnote{Institute of Mathematics, University of Warsaw, Banacha 2, 02-097 Warsaw, Poland, 
    rmeller@mimuw.edu.pl}\\
Institute of Mathematics\\
University of Warsaw\\
02-097 Warszawa, Poland\\
E-mail: r.meller@mimuw.edu.pl}
\title{Spectral norm of matrices with independent entries up to polyloglog.\thanks{Research supported by the by National Science Centre, Poland grant 2021/40/C/ST1/00330.}}
\date{}
\maketitle

\renewcommand{\thefootnote}{}

%\footnote{2010 \emph{Mathematics Subject Classification}: Primary 60E15}

%\footnote{\emph{Key words and phrases}: Random quadratic forms; random chaoses; tail and moment estimates.}

\renewcommand{\thefootnote}{\arabic{footnote}}
\setcounter{footnote}{0}

%%%%%%%%

\begin{abstract}
In this paper, the expectation of the operator norm of the random matrix $(a_{ij} X_{ij})_{i,j\leq n}$ is studied, under assumption that $(X_{ij})_{i,j \leq n}$ are independent random variables that satisfy $\lv X_{ij} \rv_{2p} \leq \kappa \lv X_{ij} \rv_p$ for each $p\geq 1$. An upper bound is derived in terms of quantities that admit a relatively simple analytic formula.
 Our upper bound yields two-sided bound up to a factor given by a power of an iterated logarithm. This factor is considerably smaller than the natural scale of the problem. This provides positive evidence for a conjecture formulated by Lata{\l}a and \'{S}wi\k{a}tkowski.

\keywords{Operator norm; Random matrices;  Two-sided bounds} \\
\klas{60B20, 46B09}
\end{abstract}

\section{Introduction}

Estimating the operator norm (denoted by $\| \cdot \|_{op}$) is one of the classical
problems in random matrix theory. If the random matrix is ``highly'' symmetric
(for example, the entries are i.i.d.), then this question is well understood,
cf.~\cite{mac:los}. The issue becomes more complicated in the inhomogeneous
setting. For instance, we consider i.i.d. random variables
$(X_{ij})_{i,j \leq n}$, a deterministic matrix $A=(a_{ij})_{i,j \leq n}$, and we are interested in obtaining an upper bound for
\begin{equation*}%\label{eq:locwstep}
 \Ex \lv (a_{ij} X_{ij})_{i,j \leq n} \rv_{op}=\Ex \sup_{v,w \in B_2} \sum_{ij} a_{ij} X_{ij} v_i w_j.   
\end{equation*}
Here and subsequently $B_2$ is the Euclidean ball in $\eR^n$. The most favorable case is when we are able to obtain two-sided estimates i.e. we can show that
\[\Ex \lv (a_{ij} X_{ij})_{i,j \leq n} \rv_{op}\rown F((a_{i,j})_{i,j}, (\mathcal{L}(X_{ij}))_{i,j}),\]
where $F$ is ''simple''. For two nonnegative functions $f,g$, we write $f\lesssim g$ (or $f \lesssim^\alpha g$) if there exists an absolute constant $C$ (constant $C$, which depends only on the parameter $\alpha$) such that $f \leq Cg$ ($f \leq C(\alpha) g)$.  The notation $f\approx g$  means that $f \lesssim g$ and $g \lesssim g$. Analogously we define $f \rown^\alpha g$. We also use the convention that the constant $C$ (the constant $C(\alpha)$) may differ at each occurrence .\\
The Gaussian case has been well understood only recently. In \cite{lathan} Lata{\l}a, van Handel, and Youssef showed that, for any symmetric matrix $(a_{ij})_{i,j \leq n}$
\begin{equation}\label{eq:locwstepgaus}
  \Ex \lv (a_{ij} g_{ij})_{i,j \leq n} \rv_{op}\rown \max_i \lv (a_{ij})_j \rv_2 + \max_{ij} a'_{ij} \sqrt{\log i}.  
\end{equation}
Here $(a'_{ij})$ is obtained by permuting the rows and columns of the matrix $(a_{ij})$, such that, for all $j\leq n$,  $\max_j a'_{1,j} \geq \max_j a'_{2,j} \geq \cdots \geq \max_j a'_{n,j}$. The formula for non-symmetric matrices is analogous. In the same paper, they also derived an estimate for random variables satisfying the following moment condition
\begin{equation}\label{eq:wstepmom}
\exists_{\beta\geq \frac{1}{2}} \forall_{p\geq 1}\lv X_{ij} \rv_p:=\sqrt[p]{ \Ex |X_{ij}|^p } \rown p^\beta.    
\end{equation}
Condition $\beta \geq 1/2$ is crucial, since in this case, $(X_{ij})_{i,j \leq n}$'s are Gaussian mixtures (they are conditionally Gaussian random variables). By appropriate conditioning, the problem reduces to the Gaussian case, and formula \eqref{eq:locwstepgaus} yields two-sided estimates.\\
Seginer was one of the first to study the case of Rademacher random variables (i.e., symmetric random variables taking values $\pm1$). In \cite{seg} he showed that
\[\Ex \lv (a_{ij} \eps_{ij})_{i,j \leq n} \rv_{op}\lesssim \Log^{1/4} n \left(\max_i \lv (a_{ij})_j \rv_2 + \max_j \lv (a_{ij})_i \rv_2 \right),  \]
where $\Log(x):=\ln(x \vee e)$ and  $(\eps_{ij})_{i,j\in \eN}$ stands for a family of independent Rademacher random variables. We shall use this notation throughout the paper. He also proved that  $\Log^{1/4} n$ cannot be improved. Unfortunately, this estimate is often suboptimal (for instance, when the matrix is diagonal). Recently, some progress has been made, and it is now known that
\begin{equation}\label{eq:wsteprad}
    H \lesssim \Ex \lv (a_{ij} \eps_{ij})_{i,j \leq n} \rv_{op}\lesssim  \left( \Log \Log \Log \, n \right) H 
\end{equation}
where
\[H= \max_i \lv (a_{ij})_j \rv_2 + \max_j \lv (a_{ij})_i \rv_2 +\max_{1 \leq k \leq n} \min_{I\subset [n],|I|\leq k} \sup_{v,w\in B_2} \lv \sum_{i,j \notin I} a_{ij} \eps_{ij} v_i w_j \rv_{\Log(k)}.\]
The lower bound was proved by  Lata{\l}a and \'{S}wi\k{a}tkowski in~\cite{latswiat}, while the upper bound was proved by  Lata{\l}a in~\cite{latop}. In the latter, it was also shown that the triple logarithm is redundant when the matrix has entries in the set $\{0,1\}$. Although estimate \eqref{eq:wsteprad} is not two-sided, the triple logarithmic factor appearing in it is much smaller than the natural scaling for this problem, namely a power of logarithms. For this reason, it is very likely that estimate \eqref{eq:wsteprad} is two-sided, meaning the triple logarithmic factor is unnecessary.\\
To the best of our knowledge, the above results are the only ones that address two-sided (or almost two-sided) estimates for the operator norm of matrices
of the form $(a_{ij} X_{ij})_{i,j \le n}$. Rademacher random variables may be viewed as having very light tails, while random variables satisfying the moment condition \eqref{eq:wstepmom} including, in particular, Gaussian random variables, can be regarded as having medium or heavy tails.  We observe a gap in the existing results with respect to the distributions of the underlying random variables. To date, this regime has not been investigated, which is one of the aims of this paper. Subgaussian random variables are a prototypical example of distributions lying in this gap; for instance, random variables with tails
\[\Pro(|X_{ij}|\geq t) = e^{-|t|^\alpha}, \alpha >2.\]
We establish an upper bound for $\Ex \lv (a_{ij} X_{ij})_{i,j\leq n} \rv_{op}$ under the following moment condition
\begin{equation}\label{eq:*}
   \exists_{\kappa} \forall_{p\geq 1} \lv X \rv_{2p} \leq \kappa  \lv X \rv_{p}.
\end{equation}   
We  denote the class of all symmetric random variables that satisfy \eqref{eq:*} by \Slev. This class contains many natural random variables, such as normal, subgaussian, and Weibull, among others. The class also posses some desirable  properties, such as formulas for moments (cf. Theorem \ref{tw:formulamomenty}) and decomposition lemmas (cf. Lemma \ref{lem:rozbij}).\\
We now state the main result of this paper. Its essence is that the polyloglog factor is much smaller than the natural scale of the problem, namely a power of a logarithm
(as in Lata{\l}a's result, that is, in the upper bound in~\eqref{eq:wsteprad}). Therefore, Theorem \ref{tw:main} may be viewed as lending support to the hypothesis proposed by Lata{\l}a and \'{S}wi\k{a}tkowski that the lower bound in \eqref{eq:twglowneoszacowaniepodwojnylog} can be reversed (see \cite[Conjecture 4.3]{latswiat}).
\begin{theorem}\label{tw:main}
Assume that $(X_{ij})_{i,j,\leq n} $ are independent, symmetric random variables such that exists $\kappa>1$ such that for all $p\geq 1$ we have 
$\lv X_{ij} \rv_{2p} \leq \kappa \lv X_{ij} \rv_p$. Let also $\Ex |X_{ij}|=1$ for each $i,j \leq n$. Then for any matrix $A=(a_{ij})_{i,j\leq n}$ 
   \begin{equation}\label{eq:twglowneoszacowaniepodwojnylog}
         \Ex \lv (a_{ij} X_{ij})_{i,j \leq n} \rv_{op}\lesssim^\kappa   \Log^{C(\kappa)} \Log(n)\left(M(A)+\sup_{v,w \in B_2} \lv \sum_{ij} a_{ij} v_i w_j X_{ij}\rv_{\Log \, n}\right),    
        \end{equation}
       where $M(A):=\mia+\mja$.  In particular
        \begin{equation}\label{eq:wstepdwustronne}
          M(A)+D(A,X)\lesssim^\kappa \Ex \sup_{v,w \in B_2} \sum_{ij} a_{ij} X_{ij}v_i w_j \lesssim^\kappa \Log^{C(\kappa)} \Log(n)\left(M(A)+D(A,X)\right),
        \end{equation}
        where
        \[D(A,X):=\max_{1\leq k \leq n} \min_{I \subset [n],|I|\leq k} \sup_{v,w \in B_2} \lv \sum_{i,j \notin I} a_{ij} X_{ij}v_i w_j \rv_{\Log \, {k}}.\]
\end{theorem}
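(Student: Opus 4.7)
For the lower bound in \eqref{eq:wstepdwustronne}, the two row/column contributions in $M(A)$ follow from testing $\Ex \lv Mv\rv_2\leq \Ex\lv M\rv_{op}$ against coordinate vectors combined with a Paley--Zygmund inequality (the \Slev\ doubling makes $\lv X_{ij}\rv_2$ comparable to the normalization $\Ex|X_{ij}|=1$), while $D(A,X)$ is obtained by restricting to the $n-k$ rows and columns outside the optimal index set $I$ and lower-bounding $\Ex\lv\cdot\rv_{op}$ by a moment of order $\Log(k+1)$ via a net-based entropy argument, then taking the maximum over $k$.

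The upper bound \eqref{eq:twglowneoszacowaniepodwojnylog} is the main assertion and is approached by a three-stage program. First, an $\eps$-net argument for $B_2\times B_2$ of cardinality $e^{Cn}$ shows that for an $n\times n$ random matrix $M$, the expectation $\Ex\lv M\rv_{op}$ is comparable up to absolute constants to $\lv\lv M\rv_{op}\rv_{\Log n}$; it therefore suffices to control this moment of the sup-process $\sup_{v,w\in B_2}\sum_{ij}a_{ij}X_{ij}v_iw_j$. Second, the \Slev\ decomposition lemma \ref{lem:rozbij} is applied to write each independent entry $X_{ij}=\sum_k X_{ij}^{(k)}$, where $X_{ij}^{(k)}$ lives on a dyadic shell and can be identified, up to constants, with a scaled Bernoulli--Rademacher product $c_{ij}^{(k)}\eps_{ij}\delta_{ij}^{(k)}$ whose Bernoulli probability matches $\Pro(|X_{ij}|\asymp 2^k)$. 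The doubling condition implies that at moment $\Log n$ only $O(\log_2\kappa\cdot\Log\Log n)$ scales contribute nontrivially, and it simultaneously controls the size of each $c_{ij}^{(k)}$. Third, on each scale one invokes a Rademacher-type operator-norm bound in the spirit of \eqref{eq:wsteprad}: the row/column $\ell_2$-terms collapse into $M(A)$ via a Cauchy--Schwarz summation across scales, while the sparse moment terms assemble into $D(A,X)$ once the Bernoulli-weighted moments are compared to the full moments of $X_{ij}$ using the doubling condition.

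The main obstacle is budget accounting. Each of the $O(\Log\Log n)$ relevant scales incurs at best a $\Log\Log\Log$-type overhead from \eqref{eq:wsteprad}, together with additional constants from the decomposition lemma, so a naive summation would produce a factor of $\Log^{\Omega(1)} n$ rather than the target $\Log^{C(\kappa)}\Log n$. To avoid this, one has to partition the scales into a \emph{light} regime (where the entries are small relative to $M(A)$ divided by the ambient moment) and a \emph{heavy} regime (where only a sparse set of entries matters). On the light regime one reduces to a subgaussian chaining that is compatible with \eqref{eq:locwstepgaus}, so that the entire regime is absorbed into $M(A)$ with only a single polyloglog loss; on the heavy regime one uses a direct sparse union bound at moment $\Log n$ that produces the $D(A,X)$-term without an extra triple-log factor per scale. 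The delicate technical step is verifying that the interaction between the two regimes introduces no further powers of $\Log n$ and that the $\kappa$-dependence of the exponent enters only polynomially in $\log_2\kappa$, yielding the advertised $\Log^{C(\kappa)}\Log n$.
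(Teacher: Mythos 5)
Your outline has two genuine gaps, and they sit exactly where the work of the theorem lies. First, the opening reduction is not valid: a net of $B_2\times B_2$ has cardinality $e^{Cn}$, so a union bound over it at moment level $\Log n$ costs a factor of order $e^{cn/\Log n}$, not a constant; one would need moments of order $n$, not $\Log n$, to absorb the net. Moreover, even if one grants some comparison $\Ex \lv M \rv_{op}\rown \lv \, \lv M \rv_{op} \rv_{\Log n}$, you would then be trying to bound the $\Log n$-th moment \emph{of the supremum}, whereas the quantity $R_X(A)=\sup_{v,w\in B_2}\lv \sum_{ij}a_{ij}X_{ij}v_iw_j\rv_{\Log n}$ appearing in \eqref{eq:twglowneoszacowaniepodwojnylog} is the supremum \emph{of the moments}. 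Passing from the latter to the former is essentially the content of the theorem, so your first stage assumes what has to be proved. Second, the per-scale invocation of \eqref{eq:wsteprad} is not available as stated: that bound is for a deterministic coefficient matrix against Rademachers, while your dyadic-shell decomposition produces random Bernoulli selectors inside the coefficients, and the comparison of each scale's $H$- or $D$-type term with $D(A,X)$ is only asserted, not derived from the doubling condition. The proposed ``light/heavy'' split names the budget problem but does not solve it: the heavy (sparse, large-entry) regime is precisely where a ``direct sparse union bound at moment $\Log n$'' either fails or reintroduces polylogarithmic losses, and no argument is given for why the interaction of regimes costs nothing.

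For contrast, the paper's route is quite different and the double-log factor has a specific source. The small entries ($|a_{ij}|\le M(A)/\Log^{r}n$ with $r=1/\log_2\kappa$) are handled by comparing the $X_{ij}$ with Weibull($1/\log_2\kappa$) variables (Fact \ref{fa:oszog}, Lemma \ref{lem:porsup}) and applying the Lata{\l}a--van Handel--Youssef bound (Theorem \ref{twr:opweib}), which yields $M(A)$ with no logarithmic loss. The large entries form a matrix whose associated graph has degree $d_{\hat A}\le \Log^{2r}n$, and Theorem \ref{tw:glow} gives a loss of $\Log^{C(\kappa)}(d_{\hat A})\lesssim^\kappa \Log^{C(\kappa)}\Log n$; Theorem \ref{tw:glow} itself is proved by writing each entry as a product of $\lceil 2\log_2\kappa\rceil$ subgaussian factors (Lemma \ref{lem:konstrukcjazmiennych}), conditioning, and running Lata{\l}a's connected-subset chaining argument with a supremum over an auxiliary set $T$ (Theorem \ref{prop:oszapodgaus}). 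The lower bound and the passage from $R_X(A)$ to $D(A,X)$ in \eqref{eq:wstepdwustronne} are taken from Lata{\l}a--\'Swi\k{a}tkowski (their Theorem 4.1 and the permutation argument), so your sketches there, while not obviously wrong, are also not the delicate part. To repair your approach you would need, at a minimum, a genuine mechanism replacing stage one (the paper's degree-truncation plus Theorem \ref{tw:glow} plays this role) and a per-scale operator-norm bound valid for randomly sparsified coefficients whose error terms are provably dominated by $M(A)+R_X(A)$.
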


%\begin{remark}
%In the class \Slev there exist explicit formulae for the terms involving moments.
%\end{remark}

To shorten the notation we define
\begin{align}
 R_{(X_{ij})_{i,j\leq n}}(A,p)&=R_X(A,p)=\sup_{v,w \in B_2} \lv \sum_{ij} a_{ij} v_i w_j X_{ij}\rv_{p}.\label{eq:defR}
\end{align}
Most often, we will consider $p=\Log \, n$. In this case, we simply write $R_X(A)$. 

The parameter $R_X(A,p)$ can be estimated using the formula for moments
of random variables from the \Slev class (see Theorem~\ref{tw:formulamomenty} below). Before doing so, we need to introduce some additional notation.
 For fixed $(X_{ij})_{i,j \leq n}$ from the \Slev class, we define
 \begin{align*}%\label{def:Nhatfunkcja}
\hat{N}^X_{ij}(t):=\begin{cases}t^2 &|t|\leq 1 \\ -\ln \Pro(|X_{ij}|\geq t) &|t|>1, \end{cases}    
 \end{align*}
and
\begin{equation}\label{def:BPX}
    B^X_p=\{ s\in \eR^{n^2}: \sum_{i,j=1}^n \hat{N}^X_{ij}(s_{ij}) \leq p\}.
\end{equation}
For a clean statement, we temporarily change our normalization in the next theorem. The reason for this is given in the proof.
\begin{theorem}\label{twr:oszacujRX}
Under the assumptions of Theorem~\ref{tw:main}, except that we assume the normalization $\Ex |X_{ij}| = 1/e$ for $i,j \le n$, we have
\[\sup_{\substack{I\subset [n]\times [n]\\ |I|= p}} \sup_{\ t\in B^X_p} \lv (a_{ij}t_{ij})_{(i,j)\in I} \rv_{op}  \lesssim^\kappa R_X(A,p) \lesssim^\kappa \sqrt{\Log \, p} \sup_{\substack{I\subset [n]\times [n]\\ |I|= p}} \sup_{ t\in B^X_p} \lv (a_{ij}t_{ij})_{(i,j)\in I} \rv_{op}.  \]
In particular
\[R_X(A) \lesssim \sqrt{\Log \Log \, n}\sup_{\substack{I\subset [n]\times [n]\\ |I|=\Log\, n}} \left(  \sup_{t\in B^X_{\Log \, n}} \lv (a_{ij}t_{ij})_{(i,j)\in I} \rv_{op}   \right).\]
\end{theorem}

\begin{remark}
For any $|I|=p$ we have $\sum_{(i,j)\in I}\hat{N}^X_{ij}(1)=p$. Thus,
\begin{multline*}
\sup_{\substack{I\subset [n]\times [n]\\ |I|=p}} \sup_{\ t\in B^X_p} \lv (a_{ij}t_{ij})_{(i,j)\in I} \rv_{op} \\
\approx \sup_{\substack{I\subset [n]\times [n]\\ |I|=p}} \sup_{\ t\in B^X_p} \lv (a_{ij}t_{ij}\1_{|t_{ij}|>1})_{(i,j)\in I} \rv_{op}+ \sup_{\substack{I\subset [n]\times [n]\\ |I|=p}}\lv (a_{ij})_{(i,j)\in I}\rv_{op}.\end{multline*}
The second expression is easier to estimate, since we do not have to deal with the bracketing definition of the functions $(\hat{N}^X_{ij}(t))_{i,j \leq n}$.
\end{remark}

Theorem \ref{tw:main} easily implies bounds on moments of operator norm.
\begin{theorem}
Under the assumptions of Theorem~\ref{tw:main}, we have, for any $p \geq 1$,  
\[    \sqrt[p]{\Ex \lv (a_{ij} X_{ij})_{i,j \leq n} \rv^p_{op} } \lesssim^\kappa   \Log^{C(\kappa)} \Log(n)\big(M(A)+R_X(A,{\max(p,\Log\, n)})\big).  \]
\end{theorem}
\begin{proof}
 We recall \eqref{eq:defR}, the definition of $R_X(A,p)$. The assertion is an easy consequence of \cite[Theorem 1.1]{strzelec}.
\end{proof}

To prove Theorem \ref{tw:main}, we follow Lata{\l}a's argument which was used in \cite{latop}. We associate to a symmetric matrix $(a_{ij})_{i,j \leq n}$ a graph $G_A=([n],E_A)$, where $(i,j)\in E_A$ if and only if $i\neq j$ and $a_{i,j}\neq 0$.  We denote the maximal degree of vertices in $G_A$ by $d_A$. Lata{\l}a's idea was to obtain estimates on the operator norm that depend not on the matrix's dimension but on the (typically) much smaller parameter $d_A$.
\begin{theorem}\label{tw:glow}
 Assume that $(X_{ij})_{i,j \leq n}$ satisfies assumptions of Theorem \ref{tw:main}. Then for any symmetric matrix $A=(a_{ij})_{i,j \leq n}$  we have
\begin{equation}\label{eq:twglownezda}
\Ex \lv (a_{ij} X_{ij})_{i,j \leq n} \rv_{op} \lesssim^\kappa   \Log^{C(\kappa)}(d_A)\left(\mia +R_X(A) \right).     
\end{equation}
\end{theorem}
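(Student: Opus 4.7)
The plan is to follow the moment-method approach pioneered by Lata{\l}a in \cite{latop}. Since
\[\Ex \lv (a_{ij}X_{ij})_{i,j\leq n} \rv_{op} \lesssim \lv \, \lv (a_{ij}X_{ij})_{i,j\leq n} \rv_{op} \rv_{p}\]
for any $p\geq 1$, I would estimate the $p$-th moment of the operator norm with the choice $p\sim \Log(d_A)$, so that the final prefactor becomes a polynomial in $\Log(d_A)$ rather than $\Log(n)$. The assumption $\lv X_{ij}\rv_{2p}\leq\kappa\lv X_{ij}\rv_p$ forces moments to grow at most like $p^{\log_2\kappa}$, which is exactly what makes such a moment-to-expectation trade-off cheap: passing from $p$ to $2p$ costs a factor of $\kappa$, so summing geometrically is harmless.

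First, I would use the decomposition lemma for the class \Slev{} (Lemma \ref{lem:rozbij}) to split each $X_{ij}$ into a mixture of simpler pieces, each essentially supported at a single scale. This reduces matters to matrices whose nonzero entries are of comparable magnitude, with only logarithmic overhead from summation over scales.

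Next, for each scale, the key step is to control
\[\lv \sup_{v,w\in B_2}\sum_{ij}a_{ij}X_{ij}v_iw_j\rv_p\]
with $p\sim \Log(d_A)$. A naive net on $B_2\times B_2$ would produce a $\Log(n)$ factor through its cardinality, so the reduction of $n$ to $d_A$ must exploit the graph structure of $A$. Following Lata{\l}a's strategy, the idea is to build nets adapted to the row/column $\ell_2$-norm profile: partition $[n]$ dyadically according to the magnitudes of the row and column norms of $A$, and on each partition class use a net whose entropy is controlled by the local sparsity (at most $d_A$ neighbours per vertex of $G_A$). Moment bounds for polynomials in \Slev{} variables (cf.\ \cite[Theorem 4.2]{d2}) would then provide, for a fixed $(v,w)$, sharp estimates of $\lv\sum_{ij} a_{ij}X_{ij}v_iw_j\rv_p$ in terms of $R_X(A,p)$ and the quantity $\mia$ entering $M(A)$.

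The principal obstacle will be the combinatorial bookkeeping in the net construction and the synchronised choice of moment exponents. One has to split the matrix into dyadic layers according to the magnitude of $a_{ij}$, and on each layer balance the net entropy with the moment decay so that a union bound costs only a power of $\Log(d_A)$. For layers containing many small but numerous entries, the chaining must be iterated through several scales, and the success of the chaining hinges on the fact that each row and column has at most $d_A$ nonzero entries, which keeps the effective dimension of the relevant subspaces under control. Once each scale is handled, combining the estimates through the triangle inequality and summing over the dyadic decomposition should yield \eqref{eq:twglownezda}.
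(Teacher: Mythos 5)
Your outline stays entirely within the ``single-pass'' moment/chaining framework (nets adapted to $G_A$, union bound over sparse subsets, moment bounds at level $p\sim\Log(d_A)$ for the original variables), and this is precisely where it breaks down for general $\kappa$. In the union-bound step one must control moments of $\sum_{i\in I,j\in J}a_{ij}X_{ij}\eta_i\eta'_j$ at level of order $(|I|+|J|)\Log(d_A)$, because the number of relevant ($r$-connected) subsets of cardinality $k$ is of order $n\,d_A^{Ck}$ (Lemma \ref{lem:grafzlicz}). For subgaussian entries this costs only $\sqrt{(k+l)\Log d_A}$, which is absorbed by the $1/\sqrt{kl}$ normalization and produces the row-norm term --- that is exactly how Lemma \ref{lem:drugi} works. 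But under the hypothesis $\lv X\rv_{2p}\le\kappa\lv X\rv_p$ the moment growth is $p^{\log_2\kappa}$, so the same step costs $\bigl((k+l)\Log d_A\bigr)^{\log_2\kappa}$, and for $\log_2\kappa>1/2$ the factor $(k+l)^{\log_2\kappa}$ is not compensated by $\sqrt{kl}$; your remark that ``passing from $p$ to $2p$ costs a factor of $\kappa$'' hides that the needed moment level grows linearly in the subset size, so the total cost is polynomial in $k$ with exponent $\log_2\kappa$, not a harmless geometric sum. No choice of nets or dyadic layering of the $a_{ij}$ repairs this, because the obstruction sits in the tails of the entries themselves. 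You also misuse Lemma \ref{lem:rozbij}: it bounds a supremum over a union of index sets $T_1,\dots,T_m$, it does not decompose the random variables $X_{ij}$ into single-scale pieces.

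The paper's proof avoids this by a different mechanism, which is the missing idea in your proposal: each $X_{ij}\in$ \Slev with $\kappa=2^{\wym/2}$ is replaced, up to constants in distribution, by a product $\prod_{k\le\wym}X^{(k)}_{ij}$ of $\wym$ independent subgaussian variables (Lemmas \ref{lem:wlasnosciN}, \ref{lem:konstrukcjazmiennych}, Corollary \ref{cor:zmienzmienne}). One then inducts on the number of factors: conditioning on the last factor and applying the induction hypothesis to the matrix $A\odot X^{(\wym+1)}$, the resulting conditional quantity $R_{X^{(\le\wym)}}(A\odot X^{(\wym+1)})$ is rewritten, via the moment formula of Theorem \ref{tw:formulamomenty}, as a supremum over a deterministic $1$-unconditional set $T\subset\eR^{n^2}$, so that the subgaussian estimate with an arbitrary set $T$ (Theorem \ref{prop:oszapodgaus}) applies to the single remaining subgaussian layer. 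Thus the chaining over $G_A$ is only ever carried out for subgaussian variables, where the $\sqrt{q/p}$ moment comparison closes the union bound; the heavy-tail parameter $\kappa$ enters only through the number of induction steps, i.e.\ through the exponent $C(\kappa)$ of $\Log(d_A)$. Note also that in the paper the set $T$ from \cite[Theorem 4.2]{d2} is used to transport conditional moment norms into the $T$-supremum framework, not (as in your sketch) to bound $\lv\sum_{ij}a_{ij}X_{ij}v_iw_j\rv_p$ for fixed $v,w$. Without the factorization-plus-conditioning induction, your argument cannot produce a bound of the form $\Log^{C(\kappa)}(d_A)\bigl(\mia+R_X(A)\bigr)$.
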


\textbf{Organization of the paper.} The next section discusses the basic tools used in the sequel. In Section \ref{sec:tw1.1} we deduce Theorem \ref{tw:main} from Theorem \ref{tw:glow}. The argument builds on a generalization of the proof of \cite[Theorem 1.9]{latop}. We also prove Theorem \ref{twr:oszacujRX}. The goal of Section~\ref{sec:subgaus} is to prove Theorem~\ref{prop:oszapodgaus}. It is a version of Theorem~\ref{tw:glow} for subgaussian random variables, with an additional supremum over a set $T \subset \eR^{n^2}$
(no conditions are imposed on~$T$). Our argument is a direct transcription of Lata{\l}a's proof of \cite[Proposition~4.4]{latop}, rewritten to fit our notation and assumptions.
The two proofs are otherwise identical. This is made possible by the highly general nature of Lata{\l}a's arguments, which rely on pointwise estimates for random variables, together with a clever discretizations of the Euclidean unit ball. However, not all of Lata{\l}a's arguments admit such a generalization. Consequently, we could not prove Theorem \ref{tw:main} with a triple logarithmic factor. We present the proof in full in order to keep the exposition self-contained. The last section is devoted to the proof of Theorem \ref{tw:glow}.\\
\textbf{Acknowledgments:} The author would like to thank Rafa{\l} Lata{\l}a for valuable discussions related to this work. This paper was inspired by reading his article \cite{latop}.

\section{Tools}
 We start with two inequalities, known as contraction principles in the literature. Here and subsequently $\eps_1,\eps_2,\ldots$ stands for independent Rademacher random variables (symmetric $\pm 1$ random variables). 
\begin{fact}\label{fa:kontr}
 Consider a set $T\subset \eR^n$ and $x,y\in \eR^n$ such that for any $i\leq n$, $|x_i|\leq |y_i|$. Then
 \begin{align*}
 \Ex \sup_{t\in T} \sum_i t_i x_i \eps_i \leq \Ex \sup_{t\in T} \sum_i t_i y_i \eps_i, \textrm{ and }\lv \sum_i x_i \eps_i \rv_p \leq \lv \sum_i y_i \eps_i \rv_p.
 \end{align*}

\end{fact}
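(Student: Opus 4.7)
The plan is to reduce the inequality with $|x_i|\leq |y_i|$ to the trivial equality case by a randomization argument, exploiting both the symmetry of the Rademachers and the convexity of $\sup$ and of $|\cdot|^p$.

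First I would handle the degenerate coordinates: whenever $y_i=0$ the assumption forces $x_i=0$, so those terms can be discarded from both sides. For the remaining indices I define $\alpha_i:=x_i/y_i\in[-1,1]$, so that $x_i=\alpha_i y_i$. The key observation is that any number $\alpha_i\in[-1,1]$ is the expectation of a $\pm 1$ random variable: I would introduce an auxiliary family $\eta_1,\ldots,\eta_n$ of independent random variables, independent of $(\eps_i)_i$, with $\Pro(\eta_i=1)=(1+\alpha_i)/2$ and $\Pro(\eta_i=-1)=(1-\alpha_i)/2$, so that $\Ex \eta_i=\alpha_i$. Then, for each fixed $\omega$ in the $\eps$-space,
\[
\sum_i t_i x_i \eps_i \;=\; \sum_i t_i y_i \alpha_i \eps_i \;=\; \Ex_\eta \sum_i t_i y_i \eta_i \eps_i.
\]

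For the first assertion I would now take $\sup_{t\in T}$, push it through the conditional expectation by Jensen (since $\sup$ is convex), and then take expectation over $\eps$ to obtain
\[
\Ex_\eps \sup_{t\in T}\sum_i t_i x_i \eps_i \;\leq\; \Ex_\eps \Ex_\eta \sup_{t\in T} \sum_i t_i y_i \eta_i \eps_i \;=\; \Ex \sup_{t\in T}\sum_i t_i y_i(\eta_i\eps_i).
\]
The final step is the distributional identity $(\eta_i\eps_i)_{i\leq n}\stackrel{d}{=}(\eps_i)_{i\leq n}$: since $\eps_i$ is a symmetric $\pm 1$ variable independent of $\eta_i$, a short computation gives $\Pro(\eta_i\eps_i=1)=\Pro(\eta_i\eps_i=-1)=1/2$, and independence across $i$ is preserved. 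This rewrites the right-hand side as $\Ex \sup_{t\in T}\sum_i t_i y_i \eps_i$, proving the first inequality.

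For the second assertion I would apply exactly the same randomization, this time to the convex function $|\cdot|^p$ in place of $\sup_{t\in T}$: starting from $\sum_i x_i\eps_i = \Ex_\eta \sum_i y_i \eta_i \eps_i$, Jensen gives $|\sum_i x_i\eps_i|^p\leq \Ex_\eta|\sum_i y_i\eta_i\eps_i|^p$, and after taking $\Ex_\eps$ the same distributional identity $\eta_i\eps_i\stackrel{d}{=}\eps_i$ turns the bound into $\Ex|\sum_i y_i \eps_i|^p$; a $p$-th root finishes the proof. I do not expect any real obstacle here — the classical trick $\alpha_i=\Ex\eta_i$ combined with the invariance of Rademacher products under independent signs does all the work; the only mild care needed is isolating the coordinates with $y_i=0$ before dividing.
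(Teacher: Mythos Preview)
Your argument is correct. The randomization trick --- writing $\alpha_i=x_i/y_i$ as the mean of an independent $\pm1$ variable $\eta_i$, applying Jensen to $\sup$ (respectively $|\cdot|^p$), and then absorbing $\eta_i$ into $\eps_i$ via the distributional identity $(\eta_i\eps_i)_i\stackrel{d}{=}(\eps_i)_i$ --- is a clean, self-contained proof of both inequalities.

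The paper takes a different route: it does not argue directly but simply invokes known contraction principles. For the supremum inequality it observes that $\varphi_i(s)=\1_{y_i\neq 0}\,s\,x_i/y_i$ is a $1$-Lipschitz map vanishing at $0$, so after the change of variables $s_i=t_i y_i$ the claim becomes an instance of Talagrand's comparison theorem for Rademacher processes (\cite[Theorem~6.5.1]{tal}); the moment inequality is similarly quoted from Ledoux--Talagrand (\cite[Theorem~4.4]{proinban}). In effect you have written out, from scratch, one of the standard proofs of the very theorems the paper cites. Your approach has the advantage of being elementary and not requiring the reader to chase references; the paper's approach is terser and situates the fact within the broader contraction-principle framework, which is the form in which most readers will recognize it.
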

\begin{proof}
    Since $\varphi_i(t)=t \cdot \1_{y_i \neq 0}  \frac{x_i}{y_i}$ is a contraction ($\lv \varphi_i \rv_{Lip} \leq 1$), the first inequality follows from \cite[Theorem 6.5.1]{tal}. The second one is an easy consequence of \cite[Theorem 4.4]{proinban}.
\end{proof}
Using standard arguments, we can derive a counterpart of the contraction principle for general symmetric random variables.
\begin{lemma}\label{lem:porsup}
    Let $X_1,\ldots,X_n$ and $Y_1,\ldots,Y_n$ be two sequences of independent, symmetric random variables such that, for a certain $M>0$
    \[\forall_{t\geq M} \forall_{i\leq n} \, \Pro(|X_i|\geq t) \leq \Pro(|Y_i| \geq t).\]
    Then for any set $T\subset \eR^n$
    \[\Ex \sup_{t\in T} \sum_i t_i X_i \leq  \left(1+\frac{M}{\min_i \Ex |Y_i|}\right) \Ex \sup_{t\in T} \sum_i t_i Y_i,\]
    and for any $a_1,\ldots,a_n \in \eR$, $p\geq 1$
    \[ \lv \sum_i a_i X_i \rv_p \leq\left(1+\frac{M}{\min_i \Ex |Y_i|}\right)\lv \sum_i a_i Y_i \rv_p. \]
\end{lemma}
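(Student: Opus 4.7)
The plan is to decompose each $X_i$ into a small part and a tail part, handle each by a separate application of Fact~\ref{fa:kontr}, and link the small part back to $Y$ via Jensen's inequality. Concretely, set $X_i^b:=X_i\1_{|X_i|\leq M}$ and $X_i^u:=X_i\1_{|X_i|>M}$; both remain symmetric, and the hypothesis extends trivially to $\Pro(|X_i^u|\geq t)\leq \Pro(|Y_i|\geq t)$ for every $t\geq 0$. Write
\[\Ex\sup_{t\in T}\sum_i t_i X_i\leq \Ex\sup_{t\in T}\sum_i t_i X_i^b+\Ex\sup_{t\in T}\sum_i t_i X_i^u;\]
the goal is to bound the first term by $\frac{M}{\min_i \Ex|Y_i|}\Ex\sup_{t\in T}\sum_i t_i Y_i$ and the second by $\Ex\sup_{t\in T}\sum_i t_i Y_i$.

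For the tail contribution I would exploit symmetry by writing $X_i^u=\eps_i|X_i^u|$ and $Y_i=\eps_i|Y_i|$ in distribution for an independent Rademacher sequence $(\eps_i)$, and use the monotone coupling provided by the one-dimensional stochastic domination $|X_i^u|\preceq |Y_i|$ to realize these variables on a common probability space with $|X_i^u|\leq|Y_i|$ almost surely, independently across $i$. Conditioning on the absolute values and invoking Fact~\ref{fa:kontr} pointwise then gives $\Ex\sup_{t\in T}\sum_i t_i X_i^u\leq \Ex\sup_{t\in T}\sum_i t_i Y_i$.

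For the bounded part, $|X_i^b|\leq M$ deterministically, so the same symmetrization combined with Fact~\ref{fa:kontr} applied conditionally with the deterministic envelope $M$ yields $\Ex\sup_{t\in T}\sum_i t_i X_i^b\leq M\,\Ex\sup_{t\in T}\sum_i t_i \eps_i$. To pull this back to the $Y$-side I would write $Y_i=\eps_i|Y_i|$ and apply Jensen's inequality in the inner expectation over $|Y|$ (justified by convexity of $x\mapsto \sup_{t\in T}\sum_i t_i x_i$) to obtain $\Ex\sup_{t\in T}\sum_i t_i Y_i\geq \Ex\sup_{t\in T}\sum_i t_i\eps_i\Ex|Y_i|$; a final application of Fact~\ref{fa:kontr} with $x_i=\min_j\Ex|Y_j|$ and $y_i=\Ex|Y_i|$ absorbs the remaining mismatch.

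The $\lv\cdot\rv_p$-inequality follows by repeating these three steps verbatim, using the $L^p$-triangle inequality for the initial split and the second clause of Fact~\ref{fa:kontr} in place of the first. The most delicate step is the coupling used for the tail contribution --- arranging independent pairs $(|X_i^u|,|Y_i|)$ with $|X_i^u|\leq|Y_i|$ that preserve both independence across $i$ and symmetry within each coordinate --- but this is a routine consequence of one-dimensional stochastic domination, so I expect no essential obstacle.
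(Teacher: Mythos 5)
Your proposal is correct and follows essentially the same route as the paper: the same split of each $X_i$ at level $M$, the tail part handled by a monotone (quantile) coupling plus the conditional contraction principle of Fact~\ref{fa:kontr}, and the bounded part compared to $M\eps_i$ and pulled back to $Y$ via Jensen and a second contraction, with the $L^p$ statement obtained by the identical argument using the second clause of Fact~\ref{fa:kontr}. No gaps.
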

\begin{proof}
  By inversing the CDF we may assume that $X_i$, $Y_i$ are defined on the same probability space and that
    \[|X_i|\1_{|X_i|\geq M}\leq |Y_i|\1_{|Y_i|\geq M} \ a.s..\]
  Let $\eps_1,\ldots,\eps_n$ be independent of $(X_i)_{i\leq n},(Y_i)_{i\leq n}$. Using Fact \ref{fa:kontr} conditionally on $(X_i),(Y_i)$ we get that
    \begin{align*}
        \Ex \sup_{t\in T}\sum_i t_i X_i\1_{|X_i|\geq M}&= \Ex \sup_{t\in T}\sum_i t_i |X_i|\1_{|X_i|\geq M} \eps_i \\
        &\leq \Ex \sup_{t\in T} \sum_i t_i |Y_i|\eps_i=\Ex \sup_{t\in T} \sum_i t_i Y_i.
    \end{align*}
Using the same argument and Jensen's inequality,
    \begin{align*}
      \Ex \sup_{t\in T}\sum_i t_i X_i\1_{|X_i|< M} &\leq M \Ex \sup_{t\in T} \sum_i t_i \eps_i \\
      &\leq \frac{M}{\min_i \Ex |Y_i|} \Ex \sup_{t\in T} \sum_i t_i \eps_i \Ex |Y_i| \leq \frac{M}{\min_i \Ex |Y_i|} \Ex \sup_{t\in T} \sum_i t_i Y_i.   
    \end{align*}
        Thus, the first inequality holds. We show the second in the same manner.
\end{proof}

The following is a direct consequence of Lemma \ref{lem:porsup}.
\begin{lemma}\label{lem:zmnienwspol}
  Let  $X_1,\ldots,X_n$ be independent, symmetric random variables. Consider real numbers  $(a_i)_{i\leq n},(b_i)_{i\leq n}$  such that $|a_i|\leq |b_i|$ for $i\leq n$. Then
  \[ \lv \sum_i a_i X_i \rv_p \leq \lv \sum_i b_i X_i \rv_p.\]
\end{lemma}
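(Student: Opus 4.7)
The plan is to apply Lemma \ref{lem:porsup} to the paired sequences $\tilde X_i := a_i X_i$ and $\tilde Y_i := b_i X_i$, with the free coefficients in that lemma all taken equal to $1$. The point is that scaling by a real constant preserves independence and symmetry, so the hypotheses of Lemma \ref{lem:porsup} transfer in the cleanest possible way to these new variables.

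First I would make two harmless reductions. If $b_i = 0$ then $|a_i|\leq|b_i|$ forces $a_i=0$, so the $i$-th term contributes nothing to either side and may be dropped; thus I may assume $b_i \neq 0$ for every $i$. Similarly, if $\Ex|X_i|=0$ then $X_i\equiv 0$ and the same remark applies. After these reductions, both $\tilde X_i$ and $\tilde Y_i$ are independent and symmetric, and $\min_i \Ex |\tilde Y_i| = \min_i |b_i|\Ex|X_i| > 0$.

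Next I would verify the tail comparison required by Lemma \ref{lem:porsup}: for every $t\geq 0$,
\[
\Pro(|\tilde X_i|\geq t)=\Pro\!\big(|X_i|\geq t/|a_i|\big)\leq \Pro\!\big(|X_i|\geq t/|b_i|\big)=\Pro(|\tilde Y_i|\geq t),
\]
since $|a_i|\leq|b_i|$ means $t/|a_i|\geq t/|b_i|$. Thus the hypothesis of Lemma \ref{lem:porsup} is satisfied with any $M>0$. Applying that lemma with coefficients $1,\ldots,1$ gives
\[
\lv \sum_i a_i X_i \rv_p = \lv \sum_i \tilde X_i \rv_p \leq \left(1+\frac{M}{\min_i |b_i|\,\Ex|X_i|}\right) \lv \sum_i b_i X_i \rv_p.
\]
Letting $M\to 0^+$ collapses the prefactor to $1$ and yields the claim.

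There is really no substantive obstacle here: the entire content of the argument is the observation that $M$ in Lemma \ref{lem:porsup} may be taken arbitrarily small once the tail comparison holds for \emph{all} $t\geq 0$, so the multiplicative loss disappears. The only bookkeeping worth being explicit about is the elimination of zero coefficients (and degenerate $X_i$), which ensures the denominator $\min_i |b_i|\Ex|X_i|$ is strictly positive and the limit $M\to 0^+$ is legitimate.
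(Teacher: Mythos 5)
Your proof is correct and follows the same route as the paper, which simply notes that the lemma is a direct consequence of Lemma \ref{lem:porsup}: you apply that lemma to $a_iX_i$ and $b_iX_i$ and remove the prefactor by letting $M\to 0^+$, which is exactly the intended deduction. The bookkeeping about zero coefficients and degenerate $X_i$ (so that $\min_i \Ex|b_iX_i|>0$) is handled properly.
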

%\begin{proof}
 %   Since $(X_i)_{i\leq n}$ has the same distibution as $(|X_i|\eps_i)_{i\leq n}$ it is enough to show that
  %  \[\Ex^\eps \left|\sum_i a_i |X_i| \eps_i \right|^p \leq \Ex^\eps \left|\sum_i b_i |X_i| \eps_i \right|^p.\]
   % The above is easy consequence of a version of the contraction principle (see \cite[Theorem 4.4]{proinban}).
%\end{proof}

The next lemma is a straightforward application of Jensen's inequality. However, since the argument is used several times in subsequent work, we decided to state it as a separate fact.

\begin{lemma}\label{lem:turbojensen}
Let $X_1,\ldots,X_n,Y_1,\ldots,Y_n$ be independent, symmetric random variables. Then for any real numbers $a_1,\ldots,a_n$ and any $p\geq 1$
\[\lv \sum_i a_i X_iY_i \rv_p \geq \lv \sum_i a_i X_i \Ex |Y_i| \rv_p. \]
\end{lemma}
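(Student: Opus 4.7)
The plan is to exploit the symmetry of the $Y_i$'s to replace them by $|Y_i|$, and then apply Jensen's inequality to the $|Y_i|$-variables while conditioning on the $X_i$'s.

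First I would observe that, since each $Y_i$ is symmetric and independent of the others, one may write $Y_i \stackrel{d}{=} \eps_i |Y_i|$, where $\eps_1,\ldots,\eps_n$ are Rademacher random variables independent of everything else. Hence
\[
    \sum_i a_i X_i Y_i \stackrel{d}{=} \sum_i a_i X_i \eps_i |Y_i|.
\]
Now, conditionally on $(|Y_i|)_{i\leq n}$, the random variables $(\eps_i X_i)_{i\leq n}$ are independent and symmetric with the same marginal laws as $(X_i)_{i\leq n}$, since each $X_i$ is symmetric and independent of the rest. Therefore, again conditionally on $(|Y_i|)_{i\leq n}$ (and hence unconditionally),
\[
    \sum_i a_i X_i \eps_i |Y_i| \stackrel{d}{=} \sum_i a_i X_i |Y_i|.
\]

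The second step is Jensen's inequality. Condition on $(X_i)_{i\leq n}$ and consider the function $f\colon \eR^n \to \eR$ defined by $f(y) = \big|\sum_i a_i X_i y_i\big|^p$. Since $p\geq 1$ and $y\mapsto \sum_i a_i X_i y_i$ is linear, $f$ is convex. Because the $|Y_i|$'s are independent, Jensen's inequality applied coordinatewise yields
\[
    \Ex_Y \Big| \sum_i a_i X_i |Y_i| \Big|^p \geq \Big| \sum_i a_i X_i \Ex|Y_i| \Big|^p.
\]
Taking expectation over $(X_i)_{i\leq n}$ and combining with the distributional identity from the first step gives
\[
    \lv \sum_i a_i X_i Y_i \rv_p^p = \Ex \Big| \sum_i a_i X_i |Y_i| \Big|^p \geq \Ex \Big| \sum_i a_i X_i \Ex|Y_i| \Big|^p = \lv \sum_i a_i X_i \Ex|Y_i| \rv_p^p,
\]
which is exactly the claim after taking $p$-th roots. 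There is no genuine obstacle; the only point that requires care is the symmetrization argument, i.e.\ checking that one may indeed replace $Y_i$ by $|Y_i|$ inside the sum without changing the $L^p$ norm, and this follows from the symmetry of the $X_i$'s as outlined above.
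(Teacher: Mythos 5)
Your proof is correct and takes essentially the same approach as the paper: symmetrize to replace $Y_i$ by $\eps_i |Y_i|$, then apply Jensen's inequality to the $|Y_i|$'s conditionally on the remaining variables. The only cosmetic difference is that you use the symmetry of the $X_i$'s to discard the Rademacher signs before invoking Jensen, whereas the paper keeps $\eps_i$ through the Jensen step and removes it at the end; both orderings are valid.
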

\begin{proof}
    Let $(\eps_i)$ be independent of $(X_i),(Y_i)$. Since the $(X_i),(Y_i)$ are symmetric, Jensen's inequality implies that
    \begin{align*}
     \lv \sum_i a_i X_iY_i \rv_p &=\lv \sum_i a_i X_i\eps_i |Y_i|  \rv_p \geq  \lv \sum_i a_i X_i \eps_i  \Ex |Y_i|  \rv_p =\lv \sum_i a_i X_i \Ex |Y_i| \rv_p.
    \end{align*}
\end{proof}

%% Ponizsze twierdzenie o skalowaniu momentów nie potrzebne, wystarczy mi dla podgaussowskich zmiennych

%\begin{fact}\label{fa:wzrostmom}
%    Assume that $X_i$ are independent, symmetric and that they satisfy \eqref{eq:*}. Then for any $1 \leq p \leq q$ and any $a_1,\ldots,a_n \in \eR$ we have
%    \[\lv \sum_i a_i X_i \rv_q \leq C(\kappa) \left(\frac{q}{p}\right)^{C'(\kappa)} \lv \sum_i a_i X_i \rv_q .\]
%    Moreover we mae take $C'(1/2)=1/2$.
%\end{fact}
%\begin{proof}
%\todo[inline]{mopze wzor podac na C' i wyjdzie wtedy od razu ile}
%    TBA
%\end{proof}

We will need the following result regarding the number of connected subsets of a graph.
\begin{lemma}\cite[Lemma 3.2]{latop}\label{lem:grafzlicz}
    Let $H=(V,E)$ be a graph with $m$ vertices and maximum degree $d_H$. Then, the number of connected, subsets $I\subset V$ of cardinality $k$ is at most $m(4d_H)^{k-1}$.
\end{lemma}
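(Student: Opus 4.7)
The plan is to encode each connected subset $I \subset V$ with $|I| = k$ by a triple consisting of a root vertex of $H$, a plane (ordered rooted) tree shape on $k$ vertices, and a sequence of $k-1$ neighbor choices inside $H$, and then simply to count the number of such triples.

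To set up the encoding, I would fix an arbitrary spanning tree $T_I$ of the induced subgraph $H[I]$, pick a root $r \in I$, and fix an arbitrary ordering of the children of every internal node of $T_I$. Running a depth-first search from $r$ in this ordering enumerates the vertices of $I$ and assigns a canonical order to the $k-1$ tree edges. The resulting plane rooted tree on $k$ vertices is one of at most $C_{k-1} \leq 4^{k-1}$ possibilities, using the Catalan bound $C_n = \binom{2n}{n}/(n+1) \leq 4^n$. Moreover, each time the DFS descends along a new edge from the current vertex $u$, the far endpoint is a neighbor of $u$ in $H$ and may be specified by one of at most $d_H$ labels; over the $k-1$ descents this contributes at most $d_H^{k-1}$ possibilities. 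The choice of root $r$ contributes at most $m$ possibilities. Since the root, the plane shape, and the sequence of neighbor labels together let one replay the DFS and hence recover the vertex set $I$, every connected $I$ of size $k$ arises from at least one such triple, so the total number of connected $I$ of size $k$ is at most $m \cdot 4^{k-1} \cdot d_H^{k-1} = m(4d_H)^{k-1}$.

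There is no real obstacle; this is a standard tree-encoding argument, and the only estimate beyond bookkeeping is the Catalan bound $C_n \leq 4^n$. The sole mildly delicate point is ensuring that the chosen encoding really does determine $I$, which is handled by letting the plane tree structure dictate the DFS order so that at every step it is unambiguous which vertex is being left and which neighbor is being entered.
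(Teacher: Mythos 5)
Your proposal is correct. Note that the paper itself gives no proof of this lemma: it is quoted verbatim from \cite{latop} (Lemma 3.2 there), so there is no internal argument to compare against; your write-up in effect supplies the standard proof of the cited fact. The encoding you describe is sound: a connected $I$ of size $k$ yields a rooted spanning tree of $H[I]$, whose DFS traversal gives a plane rooted tree on $k$ vertices (at most $C_{k-1}\leq 4^{k-1}$ shapes), and, after fixing once and for all an enumeration of the at most $d_H$ neighbors of each vertex of $H$, each of the $k-1$ descents is recorded by a label in $\{1,\dots,d_H\}$; the triple (root, shape, label sequence) can be replayed to recover $I$, and every $I$ arises from at least one triple, giving the bound $m\,4^{k-1}d_H^{\,k-1}=m(4d_H)^{k-1}$. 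The only point worth making explicit is the fixed neighbor enumeration, which you use implicitly when saying a descent ``may be specified by one of at most $d_H$ labels''; with that said, the argument is complete and matches the classical tree-encoding/Catalan proof one would expect to find behind the citation.
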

The following general fact is surprisingly useful.
\begin{lemma}\label{lem:oszmaxpodex}
   Let $(X_{ij})_{i,j \leq n}$ be random variables (any). Then 
   \[\Ex \max_{i,j \leq n} |X_{ij}| \leq e^2 \max_{ij} \lv X_{ij} \rv_{\Log \, n} \leq e^2 \sup_{v,w \in B_2} \lv \sum_{ij} v_i w_j X_{ij} \rv_{\Log \, n}=e^2R_X(A) .\]
\end{lemma}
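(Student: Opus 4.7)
The statement splits into two inequalities, each essentially a standard moment-maximum estimate, and I would handle them in turn.

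For the first inequality, the plan is to set $p=\Log n = \ln(n \vee e)$ and apply Jensen's inequality to the convex function $x \mapsto x^p$ together with a union-type bound. Specifically, one writes
\[
\Ex \max_{i,j \leq n} |X_{ij}| \;\leq\; \Bigl(\Ex \max_{i,j \leq n} |X_{ij}|^p\Bigr)^{1/p} \;\leq\; \Bigl(\sum_{i,j} \Ex |X_{ij}|^p\Bigr)^{1/p} \;\leq\; n^{2/p} \max_{ij} \lv X_{ij} \rv_p.
\]
What remains is to verify $n^{2/p} \leq e^2$. This splits into two trivial cases: if $n \leq e$ then $p = 1$ and $n^{2/p} = n^2 \leq e^2$; if $n > e$ then $p = \ln n$ and $n^{2/p} = e^{2}$ exactly. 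Plugging in and using $p = \Log n$ gives the claim.

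For the second inequality, the plan is simply to test the supremum on standard basis vectors. For any fixed indices $i_0, j_0$, the vectors $v = e_{i_0}$ and $w = e_{j_0}$ lie in $B_2$, and $\sum_{k,l} v_k w_l X_{kl} = X_{i_0 j_0}$. Therefore
\[
\lv X_{i_0 j_0} \rv_{\Log n} \;\leq\; \sup_{v,w \in B_2} \lv \sum_{k,l} v_k w_l X_{kl} \rv_{\Log n},
\]
and taking the maximum over $i_0, j_0$ yields the desired bound. The final equality in the statement is just the definition of $R_X$ (in the notational convention where the coefficient matrix is absorbed into the right-hand side of \eqref{eq:defR}).

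There is essentially no main obstacle here; this is a routine computation. The only subtle point worth writing out carefully is the constant $e^2$, which requires separating the small-$n$ and large-$n$ cases because $\Log$ is truncated at $1$.
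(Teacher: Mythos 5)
Your proposal is correct and follows essentially the same route as the paper: both bound the maximum via its $\Log n$-th moment using Jensen's inequality and a union-type bound, giving the factor $n^{2/\Log n}\leq e^2$, and the second inequality is the trivial test against standard basis vectors (which the paper leaves implicit). The only cosmetic difference is the order of the two steps (the paper bounds $\max_{ij}|X_{ij}|\leq(\sum_{ij}|X_{ij}|^{\Log n})^{1/\Log n}$ pointwise before applying Jensen, whereas you apply Jensen first), which changes nothing of substance.
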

\begin{proof}
    Using Jensen's inequality we obtain
    \begin{align*}
       \Ex \max_{i,j \leq n} |X_{ij}| \leq \Ex \sqrt[\Log \, n]{\sum_{i,j \leq n} |X_{ij}|^{\Log \, n}}\leq  \sqrt[\Log \, n]{\Ex \sum_{i,j \leq n} |X_{ij}|^{\Log \, n}}\leq \sqrt[\Log \, n]{n^2} \max_{ij} \lv X_{ij} \rv_{\Log \, n}. 
    \end{align*}
\end{proof}

\begin{remark}\label{rem:da}
 By Lemma \ref{lem:oszmaxpodex}
\[ \Ex \sup_{v,w\in B_2} \sum_{i} a_{ii}X_{ii}v_i w_i=\Ex \max_i |a_{ii}X_{ii}| \leq R_X(A). \]
Therefore, it suffices to prove Theorem \ref{tw:glow} for matrices with a zero diagonal. 
\end{remark}
We conclude this section with a deeper structural results concerning random variables from the \Slev class. We begin by quoting the result which state that the classical decomposition theorem
for Gaussian processes can be generalized. This was first observed by Adamczak and Lata{\l}a in a slightly less general case (cf.~\cite[Lemma~5.10]{adlat}).
\begin{lemma}\cite[Lemma 5.4]{d2}\label{lem:rozbij}
 Assume that $X_1,\ldots,X_n$ are independent, \Slev-random variables. Then for any sets $T_1,\ldots,T_m \subset \eR^n$
 \[\Ex \sup_{t\in \bigcup_{l=1}^m}\sum_i t_i X_i \lesssim^\kappa \max_{l\leq n} \Ex \sup_{t\in T_k} \sum_i t_i X_i + \sup_{s,t\in \bigcup_{l=1}^m T_l} \lv \sum_i (t_i-s_i)X_i \rv_{\Log \, m}.\]
\end{lemma}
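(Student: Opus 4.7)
The plan is to adapt the classical Talagrand-style decomposition for suprema of Gaussian processes to the SRV setting, replacing sub-Gaussian concentration by an $L^p$-moment estimate at the scale $p = \Log m$. Writing $Y_l := \sup_{t\in T_l} \sum_i t_i X_i$, the starting identity is
\[\Ex \sup_{t \in \bigcup_{l} T_l} \sum_i t_i X_i = \Ex \max_{l \leq m} Y_l,\]
so the task is to move the maximum outside the expectation at a cost proportional to the $\Log m$-moment of the diameter of $\bigcup_l T_l$.

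I would split the right-hand side by centering: using $\max_l Y_l \leq \max_l \Ex Y_l + \max_l |Y_l - \Ex Y_l|$ gives
\[\Ex \max_l Y_l \leq \max_l \Ex Y_l + \Ex \max_l |Y_l - \Ex Y_l|.\]
The first summand coincides with the first term on the right-hand side of the lemma. For the second summand, the crude bound $\Ex \max_l |Z_l| \leq m^{1/p} \max_l \lv Z_l \rv_p$ with $p = \Log m$ reduces the problem to proving an $L^p$-Talagrand-type moment estimate
\[\lv Y_l - \Ex Y_l \rv_{\Log m} \lesssim^\kappa \sup_{s,t\in T_l} \lv \sum_i (t_i - s_i) X_i \rv_{\Log m}.\]

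This last step I would establish by a symmetrization-contraction argument: pass to an independent copy $Y'_l$ so that $Y_l - \Ex Y_l$ is dominated in $L^p$ by $Y_l - Y'_l$, inject independent Rademachers, apply the contraction principle (Fact \ref{fa:kontr}) conditionally on $(X_i, X'_i)$, and then exploit the moment comparison $\lv X_i \rv_{2p} \leq \kappa \lv X_i \rv_p$, which forces all relevant moments to grow only polynomially in $p$. The main obstacle is precisely this Talagrand-type inequality: one must control the $\Log m$-moment of the centered supremum by the diameter of $T_l$ in the same norm, with the constant depending only on $\kappa$. For Gaussian variables this is immediate from Borell--TIS concentration, but for general SRV variables one has to iterate the moment-doubling inequality carefully so that the implicit constant remains of the form $C(\kappa)$. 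Once this moment estimate is in place, the factor $m^{1/\Log m} \leq e$ incurred by pulling the maximum outside of the $p$-norm is absorbed into the constant, and the claimed bound follows.
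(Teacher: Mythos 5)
The paper does not prove this lemma at all --- it is quoted from \cite[Lemma 5.4]{d2} --- so your attempt has to stand on its own, and it has a genuine gap at its central step. The claimed ``Talagrand-type'' estimate
\[
\lv Y_l-\Ex Y_l\rv_{\Log m}\lesssim^\kappa \sup_{s,t\in T_l}\lv \textstyle\sum_i (t_i-s_i)X_i\rv_{\Log m}
\]
is false as stated: the centered supremum of a process is not controlled by the diameter of its index set. Take $T_l=\{t^{(l)}\}$ a singleton with $t^{(l)}$ large; then $Y_l=\sum_i t^{(l)}_iX_i$ is symmetric, so the left-hand side equals $\lv\sum_i t^{(l)}_iX_i\rv_{\Log m}$, while the right-hand side is $0$ (the same problem appears whenever $T_l$ is a small set translated by a huge vector). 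Even for Gaussians, Borell--TIS controls $\lv\sup-\Ex\sup\rv_p$ by $\sqrt{p}\,\sup_{t\in T_l}|t|_2$, not by the diameter of $T_l$. The correct bookkeeping must anchor each block at a point rather than at its mean: pick $t^{(l)}\in T_l$ and a common reference point $t^{(1)}\in\bigcup_l T_l$, split $\sum_i t_iX_i$ into the intra-block increment $\sum_i(t_i-t^{(l)}_i)X_i$, the cross-block increment $\sum_i(t^{(l)}_i-t^{(1)}_i)X_i$ (these are the terms genuinely controlled by the diameter of the union via your $m^{1/\Log m}\le e$ observation, which is fine), and the mean-zero term $\sum_i t^{(1)}_iX_i$.

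Even after this repair, the step you still need is a moment bound for the anchored block suprema of the form
\[
\lv \sup_{t\in T_l}\textstyle\sum_i(t_i-t^{(l)}_i)X_i\rv_{\Log m}\lesssim^\kappa \Ex\sup_{t\in T_l}\textstyle\sum_i(t_i-t^{(l)}_i)X_i+\sup_{t\in T_l}\lv\textstyle\sum_i(t_i-t^{(l)}_i)X_i\rv_{\Log m},
\]
i.e.\ a comparison of weak and strong moments for suprema of linear combinations of \Slev variables (note the mean term, which your proposed right-hand side omits entirely). This does not follow from ``symmetrization, contraction, and iterating the moment-doubling condition'': symmetrization and Fact \ref{fa:kontr} compare different coefficient sequences or different underlying variables, but for unbounded summands they do not produce deviation or strong-moment bounds for suprema with constants depending only on $\kappa$. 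That comparison is precisely the hard content behind the cited result (in the spirit of Adamczak--Lata{\l}a \cite{adlat} for log-concave tails and Lata{\l}a--Strzelecka for regular moments), which is why the present paper cites \cite[Lemma 5.4]{d2} instead of reproving it. As written, your argument assumes the essential inequality rather than proving it, and in the form you state it the inequality is not even true.
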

\begin{theorem}\cite[Theorem 4.1]{d2}\label{tw:formulamomenty}
    Assume that $(X_{ij})_{i,j \leq n}$ are independent random variables from the \Slev class such that $\Ex |X_{ij}|=1$ for $i,j=1,\ldots,n$. Then for any matrix $(a_{ij})_{i,j \leq n}$
    \[\lv \sum_{ij} a_{ij} X_{ij} \rv_p \rown^\kappa \sup_{t\in B^X_p} \sum_{ij} a_{ij} t_{ij},  \]
    where we recall the definition of $B^X_p$ given in \eqref{def:BPX}.
\end{theorem}
It is striking that in order to prove Theorem \ref{tw:main} we do not need the precise definition of the set $B^X_p$. It is enough to know that for any fixed $(X_{ij})_{i,j \leq n}$ from the \Slev class, there exists $1$-unconditional set $T=T(X)\subset \eR^{n^2}$ such that
\begin{equation}\label{eq:wlasnoscT}
      \sup_{t\in T} \left| \sum_{ij} t_{ij} a_{ij} \right|=  \sup_{t\in T} \sum_{ij} \left|  t_{ij} a_{ij} \right| =  \sup_{t\in T} \sum_{ij} t_{ij} a_{ij} \rown^\wym \lv \sum_{ij} a_{ij} X_{ij} \rv_{\Log \, n}.
\end{equation}
We recall that a set $T\subset \eR^{n^2}$ is called $1$-unconditional if, for any $t\in T$ and any choice of $x_{ij}=\pm 1$, we have $(t_{ij} x_{ij})_{i.j\leq n} \in T$ (clearly $B^X_p$ is $1$-unconditional). In particular if $T$ is the set from \eqref{eq:wlasnoscT} and $(Y_{ij})_{i,j \leq n}$ are random variables which are independent of $(X_{ij})_{i,j \leq n}$ then
\begin{align}\label{eq:suptpodmomentem}
    \lv \sup_{t\in T} \sum_{ij}a_{ij}t_{ij}Y_{ij}\rv_{\Log \, n} \rown^\wym \lv  \sqrt[\Log \, n]{ \Ex^X \left| \sum_{ij}a_{ij}t_{ij} X_{ij}Y_{ij}\right|^{\Log \, n} }   \rv_{\Log \, n}=\lv   \sum_{ij}a_{ij}t_{ij} X_{ij}Y_{ij}  \rv_{\Log \, n}.
\end{align}

\section{Proofs of Theorems \ref{tw:main} and \ref{twr:oszacujRX}} \label{sec:tw1.1}

We start with the fact that the correct asymptotic behavior of moments implies the existence of sub-exponential moments. The proof is straightforward, but we provide it for the reader's convenience.
\begin{fact}\label{fa:oszmomwyk}
Assume that $X$ is a random variable, such that there exist $C_1,r>0$ such that $\lv X \rv_p \leq C_1p^r$ for all $p\geq 1$. Define $\eta=\eta(C_1,r):=\min(1,\frac{r}{2C_1^{1/r}e})$. Then  $\Ex \exp(\eta |X|^{1/r})\leq C_1e+5/e$.
\end{fact}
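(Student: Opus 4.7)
The plan is to expand the exponential as a power series, interchange expectation and summation, and bound each moment term-by-term using the growth hypothesis $\lv X \rv_p \leq C_1 p^r$. Writing
\[
\Ex \exp(\eta |X|^{1/r}) = \sum_{k=0}^\infty \frac{\eta^k \Ex |X|^{k/r}}{k!},
\]
I would split the sum at $k = \lceil r \rceil$: a ``tail'' range where the hypothesis applies directly (namely $k/r \geq 1$), and a ``head'' range $1 \leq k < r$ (nonempty only when $r \geq 1$), handled via monotonicity of $p \mapsto \lv X \rv_p$.

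On the tail range $k/r \geq 1$, the hypothesis gives $\Ex |X|^{k/r} = \lv X \rv_{k/r}^{k/r} \leq C_1^{k/r} (k/r)^k$. Combining with the Stirling bound $k! \geq (k/e)^k$, the $k$-th summand is at most
\[
\left( \frac{\eta C_1^{1/r} e}{r} \right)^k,
\]
and the point of the definition of $\eta$ is precisely that this ratio is $\leq 1/2$. Summing the resulting geometric series contributes only a small absolute constant (this is where the $5/e$ in the stated bound comes from).

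On the head range $0 \leq k < r$, the elementary bound $|X|^{k/r} \leq 1 + |X|$ (valid whenever $k/r \leq 1$) together with $\Ex|X| = \lv X \rv_1 \leq C_1$ yields $\Ex |X|^{k/r} \leq 1 + C_1$. Using $\eta \leq 1$, the sum $\sum_k \eta^k/k! \leq e$ converts this into a contribution linear in $C_1$, producing the $C_1 e$ term in the advertised bound after combining with the $k=0$ summand.

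The structural content is clean: $\eta$ is calibrated exactly so that once the moment hypothesis kicks in (at $k \sim r$), the series decays geometrically with ratio $1/2$, while the finitely many small-$k$ terms contribute only a mild multiple of $C_1$. I do not expect any real obstacle here; the only delicate point is the numerical bookkeeping needed to land on the specific constant $C_1 e + 5/e$, which may require a slightly cleverer split or a sharper Stirling estimate, but nothing conceptually difficult.
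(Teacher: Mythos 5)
Your proposal is correct and takes essentially the same route as the paper: expand the exponential into a moment series, split at $k\approx r$, bound the small-$k$ terms using $\Ex|X|\le C_1$, and on the range $k>r$ combine the hypothesis with a Stirling bound so that the calibration of $\eta$ makes the terms geometric with ratio $1/2$. The only difference is bookkeeping: the paper bounds each head term by $C_1$ (rather than $1+C_1$) and uses $k!\ge e(k/e)^k$, which is precisely what lands the final constant at $C_1e+5/e$.
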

\begin{proof}
If $p\leq 1$ then $\Ex |X|^p \leq \Ex |X| \leq C_1$ so we have
    \begin{align*}
     \Ex \exp(\eta|X|^{1/r})-1&=\sum_{k=1}^\infty \frac{\eta^k \Ex |X|^{k/r}}{k!} \leq \sum_{k\leq r} \frac{\eta^k C_1}{k!}+\sum_{k>r}\frac{\eta^kC_1^{k/r}\left( \frac{k}{r}\right)^{r \frac{k}{r}}}{k!} \\
     &\leq C_1e+\sum_{k>r}\frac{\eta^kC_1^{k/r}\left( \frac{k}{r}\right)^{k}}{k!}. 
    \end{align*}
    Since $k! \geq e(k/e)^k$, we can bound the latter sum by
    \begin{align*}
        \sum_{k>r}\frac{\eta^kC^{k/r}\left( \frac{k}{r}\right)^{k}}{k!} \leq \sum_{k>r}\left(\frac{\eta C_1^{1/r}e}{r} \right)^k \frac{1}{e}\leq \sum_{k=0}^\infty \frac{2^{-k}}{e}=2/e.
    \end{align*}
\end{proof}

\begin{fact}\label{fa:oszog}
Let $X$ be from \Slev class and assume that $\Ex |X|\leq 1$. Then, there exist $\eta(\kappa),C(\kappa)>0$ such that, for $t>C(\kappa)$, we have that  
\[\Pro(|X|\geq t) \leq e^{-\eta(\kappa)t^{\frac{1}{\parr}}}.\]
\end{fact}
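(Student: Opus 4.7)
The plan is to use the moment doubling hypothesis to extract polynomial moment growth of the form $\|X\|_p \lesssim^\kappa p^{\parr}$, feed this into Fact \ref{fa:oszmomwyk} to obtain a sub-exponential moment bound, and conclude by Markov's inequality.

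First I would iterate the hypothesis. Since $\Ex|X|\leq 1$, we have $\|X\|_1\leq 1$, so an induction on $k$ gives $\|X\|_{2^k}\leq \kappa^k$. For an arbitrary $p\geq 1$ pick the unique integer $k$ with $2^{k-1}< p\leq 2^k$; then $k\leq 1+\log_2 p$, and by monotonicity of $L^p$-norms
\[
\lv X \rv_p \leq \lv X \rv_{2^k} \leq \kappa^k \leq \kappa \cdot \kappa^{\log_2 p} = \kappa\, p^{\parr}.
\]
This is exactly the hypothesis of Fact \ref{fa:oszmomwyk} with $C_1=\kappa$ and $r=\parr$.

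Applying that fact yields a constant $\eta_0=\eta_0(\kappa)>0$ and a bound
\[
\Ex \exp\bigl(\eta_0 |X|^{1/\parr}\bigr) \leq \kappa e + 5/e=:M(\kappa).
\]
By Markov's inequality, for every $t>0$,
\[
\Pro(|X|\geq t)=\Pro\bigl(\exp(\eta_0|X|^{1/\parr})\geq \exp(\eta_0 t^{1/\parr})\bigr) \leq M(\kappa)\, e^{-\eta_0 t^{1/\parr}}.
\]

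Finally, to absorb the multiplicative constant into the exponent I would choose $C(\kappa)$ large enough that for all $t>C(\kappa)$ one has $\log M(\kappa) \leq \tfrac{\eta_0}{2} t^{1/\parr}$; this is possible because $t^{1/\parr}\to\infty$ as $t\to\infty$. Then $M(\kappa) e^{-\eta_0 t^{1/\parr}} \leq e^{-\eta_0 t^{1/\parr}/2}$, and setting $\eta(\kappa):=\eta_0/2$ gives the claim. There is no real obstacle here: every step is a routine application of the already-quoted fact, Markov, and an arithmetic calculation; the only substantive input is the doubling-to-polynomial moment extraction in the first step.
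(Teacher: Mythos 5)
Your proof is correct and follows essentially the same route as the paper: iterate the doubling condition (using $\Ex|X|\leq 1$) to get $\lv X \rv_p \leq \kappa p^{\parr}$, apply Fact \ref{fa:oszmomwyk} with $C_1=\kappa$, $r=\parr$, and finish with Markov's inequality, absorbing the constant $\kappa e+5/e$ into the exponent for $t$ large. No gaps; the argument matches the paper's proof step for step.
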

\begin{proof}
The definition of the \Slev class implies that, for any $p\geq 1$ (we recall that $\kappa \geq 2$)
\[\lv X \rv_p \leq \lv X \rv_{2^{\lceil \log_2 p\rceil}} \leq \kappa^{\lceil \log_2 p\rceil} \leq \kappa \kappa^{\log_2 p}=\kappa p^{\parr}.  \]
Thus, $X$ satisfies the assumptions of Fact \ref{fa:oszmomwyk} with $r=\parr$ and $C_1=\kappa$. Let $\eta=\eta(\kappa)$ be as in Fact \ref{fa:oszmomwyk}. Markov's inequality implies that
    \begin{align*}
        \Pro(|X|\geq t) &= \Pro\left(e^{\eta |X|^{\odpar }}  \geq e^{\eta |t|^{\odpar }} \right) \\
        &\leq e^{-\eta |t|^{\odpar }} \Ex  e^{\eta |X|^{\odpar }} \leq e^{-\eta |t|^{\odpar }} (\kappa e+5/e) \leq e^{-\frac{\eta}{2} |t|^{\odpar }},
    \end{align*}
    where the last inequality holds for sufficiently large $t$.
\end{proof}
We say that $X$ has a Weibull$(r)$ distribution ($r>0$) if $X$ is a symmetric random variable and $\Pro(|X|\geq t)=e^{-t^{r}}$.
\begin{corollary}\label{cor:porsup}
Let $X_1,\ldots,X_n$ be  independent random variables from the \Slev class such that $\Ex |X_i|=1$ for each $i\leq n$. Consider $Y_1,\ldots,Y_n$ which are independent Weibull($\odpar$) random variables. Then
\[\Ex \sup_{t\in T} \sum_i t_i X_i \leq C(\kappa) \Ex \sup_{t\in T} \sum_i t_i Y_i.\]
\end{corollary}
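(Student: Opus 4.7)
The plan is to deduce the corollary from Fact \ref{fa:oszog} combined with the comparison principle of Lemma \ref{lem:porsup}. Setting $r := 1/\log_2 \kappa$, a Weibull$(r)$ variable satisfies $\Pro(|Y_i| \geq s) = e^{-s^r}$, while Fact \ref{fa:oszog} provides the matching tail estimate $\Pro(|X_i| \geq s) \leq e^{-\eta(\kappa) s^r}$ for every $s > C(\kappa)$. The two tails share the same Weibull exponent $r$, so they become pointwise comparable after a $\kappa$-dependent rescaling of $Y_i$, and the threshold $C(\kappa)$ below which nothing is known is exactly the situation that Lemma \ref{lem:porsup} was designed to absorb into a multiplicative correction.

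Concretely, I would first set $\tilde{Y}_i := \eta(\kappa)^{-1/r} Y_i$, so that
\[
\Pro(|\tilde{Y}_i| \geq s) = \Pro(|Y_i| \geq \eta(\kappa)^{1/r} s) = e^{-\eta(\kappa) s^r}
\]
for all $s \geq 0$. Comparing with Fact \ref{fa:oszog} gives $\Pro(|X_i| \geq s) \leq \Pro(|\tilde{Y}_i| \geq s)$ for every $i \leq n$ and every $s \geq M := C(\kappa)$. Second, I would invoke Lemma \ref{lem:porsup} on the pair $(X_i)$ and $(\tilde{Y}_i)$ with this $M$, obtaining
\[
\Ex \sup_{t\in T} \sum_i t_i X_i \;\leq\; \left(1 + \frac{M}{\min_i \Ex |\tilde{Y}_i|}\right) \Ex \sup_{t\in T} \sum_i t_i \tilde{Y}_i.
\]
Here $\Ex|\tilde{Y}_i| = \eta(\kappa)^{-1/r}\Gamma(1+1/r)$ is a strictly positive quantity depending only on $\kappa$, so the prefactor is $\lesssim^\kappa 1$.

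Finally, pulling the scalar $\eta(\kappa)^{-1/r}$ out of the linear functional, $\sum_i t_i \tilde{Y}_i = \eta(\kappa)^{-1/r} \sum_i t_i Y_i$, so the right-hand side collapses to $C(\kappa) \Ex \sup_{t\in T} \sum_i t_i Y_i$ for some constant $C(\kappa)$, which is the claim. There is no real obstacle here: the corollary is essentially a bookkeeping step that packages the tail bound of Fact \ref{fa:oszog} into the language of the comparison lemma, the only subtle point being that one must rescale $Y_i$ so that the Weibull tail matches $e^{-\eta(\kappa) s^r}$ rather than $e^{-s^r}$, and this rescaling is harmless since $Y_i$ appears linearly on the right.
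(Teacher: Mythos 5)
Your proposal is correct and follows exactly the route the paper intends: the paper's proof of this corollary is literally the one-line remark that it is an easy consequence of Lemma \ref{lem:porsup} and Fact \ref{fa:oszog}, and your rescaling of the Weibull variables so their tail matches $e^{-\eta(\kappa)s^{1/\log_2\kappa}}$, followed by the comparison lemma with threshold $M=C(\kappa)$, is precisely the bookkeeping that remark leaves to the reader.
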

\begin{proof}
    This is an easy consequence of Lemma \ref{lem:porsup} and Fact \ref{fa:oszog}.
\end{proof}

The following theorem follows directly from \cite[Theorem~4.4]{lathan}.
\begin{theorem}\label{twr:opweib}
Let $(Y_{ij})_{i,j \leq n}$ be independent Weibull$(r)$ random variables, where $r\leq 2$. Then, for any symmetric matrix $(a_{ij})_{i,j,\leq n}$
\[\Ex \lv (a_{ij} Y_{ij})_{i,j \leq n} \rv_{op} \leq \max_i \sqrt{\sum_j a^2_{ij}}+\Log^{1/r}(n) \max_{ij} |a_{ij}|.\]
\end{theorem}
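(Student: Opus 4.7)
The strategy is to recognize that the Weibull distribution fits exactly into the framework of Lata{\l}a--van Handel--Youssef and then to invoke their theorem as a black box. A direct computation gives $\lv Y \rv_p = \Gamma(p/r+1)^{1/p}$ for a Weibull$(r)$ variable $Y$, and Stirling's formula yields $\lv Y \rv_p \rown p^{1/r}$ for all $p\geq 1$. Hence $Y$ satisfies the moment condition \eqref{eq:wstepmom} with $\beta=1/r$. The hypothesis $r\leq 2$ is precisely $\beta\geq 1/2$, that is, the Gaussian-mixture regime in which \cite[Theorem~4.4]{lathan} is available.

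Applying that theorem to the symmetric matrix $A=(a_{ij})_{i,j\leq n}$ with $X_{ij}=Y_{ij}$ produces an estimate of the same shape as the Gaussian bound \eqref{eq:locwstepgaus}, with $\sqrt{\log i}$ replaced by $(\log i)^{\beta}=(\log i)^{1/r}$:
\[\Ex \lv (a_{ij} Y_{ij})_{i,j\leq n} \rv_{op} \lesssim^r \max_i \lv (a_{ij})_j \rv_2 + \max_{ij} a'_{ij}\,(\log i)^{1/r},\]
where $(a'_{ij})$ is the row-and-column rearrangement recalled in the paragraph following \eqref{eq:locwstepgaus}.

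To finish, I would bound the rearranged term trivially: by the definition of $(a'_{ij})$ we have $a'_{ij}\leq \max_{k,l}|a_{kl}|$, and $i\leq n$ gives $(\log i)^{1/r} \leq \Log^{1/r}(n)$. Thus the second summand is at most $\Log^{1/r}(n)\max_{ij}|a_{ij}|$, and combining the two pieces yields the asserted inequality. There is no genuine obstacle here; the only content is observing that Weibull$(r)$ with $r\leq 2$ falls into the $\beta\geq 1/2$ regime of \cite{lathan}, and that one may trade the fine ordering information in $(a'_{ij})$ for the coarser quantity $\max_{ij}|a_{ij}|$ at the cost of replacing $(\log i)^{1/r}$ by $\Log^{1/r}(n)$.
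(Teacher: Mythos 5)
Your proposal is essentially the paper's own argument: the paper gives no proof beyond the remark that the theorem ``follows directly from \cite[Theorem~4.4]{lathan}'', and your verification that Weibull$(r)$ has $\lv Y \rv_p \rown^r p^{1/r}$ (so $\beta=1/r\geq 1/2$ when $r\leq 2$), followed by coarsening $\max_{ij} a'_{ij}(\log i)^{1/r}$ to $\Log^{1/r}(n)\max_{ij}|a_{ij}|$, is exactly the intended reading. The only cosmetic difference is that your bound carries an $r$-dependent constant whereas the statement is written with constant $1$; this is immaterial, since the theorem is only ever used up to $\lesssim^\kappa$.
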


\begin{proof}[Proof of Theorem \ref{tw:main}]
The proof of \cite[Remark 4.5]{latswiat}, based on the permutation method from \cite{lathan} shows that if for any matrix $A$ we have
    \[\Ex \lv (a_{ij} X_{ij})_{i,j \leq n} \rv_{op} \leq \beta(\kappa,n)\left(M(A)+R_X(A)\right), \]
    then a stronger inequality holds
    \[\Ex \lv (a_{ij} X_{ij})_{i,j \leq n} \rv_{op} \leq C\beta(\kappa,n) \left(M(A)+D(A,X) \right).\]
    The constant $\beta$ is equal in both above inequalities. In other words, the cost of applying the permutation trick is the appearance of a multiplicative numerical constant. Hence \eqref{eq:twglowneoszacowaniepodwojnylog} implies \eqref{eq:wstepdwustronne}. Moreover, the lower bound in \eqref{eq:twglowneoszacowaniepodwojnylog} was proved in \cite{latswiat} (Theorem 4.1 therein). Thus, we will prove only the upper bound.\\
First, consider a symmetric matrix $A=(a_{ij})_{i,j \leq n}$. In particular
\[M=M(A)=2 \mia.\]
Let $(Y_{ij})_{i,j\leq n}$ be independent random variables with Weibull$(r)$ distribution, where $r:=\frac{1}{\log_2 \kappa}$ (recall that $\kappa\geq 2$). By Corollary \ref{cor:porsup} and Theorem \ref{twr:opweib}
    \begin{align*}%\label{eq:locbulbasaur}
     \Ex \lv (a_{ij}\1_{|a_{ij}|\leq \frac{M}{\Log^r(n)}}X_{ij})_{i,j \leq n} \rv_{op} \lesssim^\kappa \Ex \lv  (a_{ij} \1_{|a_{ij}|\leq \frac{M}{\Log^r(n)}} Y_{ij})_{i,j \leq n} \rv_{op} \lesssim^\kappa  M.   
    \end{align*}
    On the other hand
    \[\max_i \left|\left\{j: |a_{ij}|>\frac{M}{\Log^r\, n} \right\} \right| \leq \Log^{2r}\, n. \]
    Thus if $\hat{A}:=(a_{ij}\1_{|a_{ij}|>\frac{M}{\Log^r \, n} })_{i,j \leq n}$ then $d_{\hat{A}}\leq \Log^{2r} \, n$. Since by Lemma \ref{lem:zmnienwspol}  $R_X(\hat{A}) \leq \RX$ formula \eqref{eq:twglownezda} implies that
    \begin{align*}
        \Ex \lv (a_{ij}\1_{|a_{ij}|>\frac{M}{\Log^r \, n} }X_{ij})_{i,j \leq n} \rv_{op} \lesssim^\kappa \Log^{C(\kappa)}(\Log^{2r} \, n)\left(M+\RX\right).
    \end{align*}
Thus, Theorem \ref{tw:glow} holds for symmetric matrices since,
    \[\Log^{C(\kappa)}(\Log^{2r} \, n)\lesssim^\kappa 1+\Log^{C(\kappa)}(\Log \, n) \lesssim^\kappa \Log^{C(\kappa)} (\Log \, n). \]
Now, consider any matrix $A=(a_{ij})_{i,j\leq n}$. Let $(X_{ij})_{i,j}, (X'_{ij})_{i,j} \leq n$ be independent random variables that satisfy assumptions of Theorem \ref{tw:glow}. We also assume that $X'_{ji}$ is distributed as  $X_{ij}$ (the transposition of indices is intentional).  Consider
\[A^{sym}=(a^{sym}_{ij})_{i,j \leq 2n}:=\begin{bmatrix}
    0 &A\\
    A^T &0
\end{bmatrix}.\]
 Since $A^{sym}$ is a symmetric, block matrix (by slight abuse of notation) Theorem \ref{tw:glow} for symmetric matrices implies
\begin{align*}
     \Ex  \lv (a_{ij}X_{ij})_{i,j \leq n} \rv_{op} &\leq \Ex \lv \begin{bmatrix} 0 &(a_{ij}X_{ij})_{i,j \leq n} \\ ((a_{ij}X'_{ij})_{i,j \leq n})^T &0 \end{bmatrix}\rv_{op}\\
       &\lesssim^\kappa \Log^{C(\kappa)} \Log(n) \left(\max_i \lv (a^{sym}_{ij})_j \rv_2+R_{(X,X')}(A^{sym})\right). 
\end{align*}
Clearly 
 \[\max_i \lv (a^{sym}_{ij})_j \rv_2=\max_j \lv (a^{sym}_{ij})_i \rv_2=\max \left(\mia, \mja\right),\]
 and (again by slight abuse of notation)
 \begin{align*}
R_{(X,X')}(A^{sym})&=\sup_{v,w\in B_2} \lv v \cdot \begin{bmatrix} 0 &A\ast X \\ (A\ast X')^T &0 \end{bmatrix} w^T \rv_{\Log \, n} \\
&\leq 2 \sup_{v,w \in B_2} \lv \sum_{ij} a_{ij}X_{ij}\rv_{\Log \, n}=2\RX. 
 \end{align*}
\end{proof}
\begin{proof}[Proof of Theorem \ref{twr:oszacujRX}]

    Fix $I\subset [n]\times [n]$ with cardinality $p$. For any $v,w \in B_2$ by Jensen's inequality and Theorem \ref{tw:formulamomenty} 
    \[\lv \sum_{ij} a_{ij}v_iw_j X_{ij} \rv_p \geq \lv \sum_{(i,j)\in I} a_{ij}v_iw_j X_{ij} \rv_p \gtrsim^\kappa \sup_{t\in B^X_p} \sum_{(i,j)\in I} a_{ij}v_iw_j t_{ij}. \]
    The lower bound follows by taking supremum over $v,w\in B_2$. \\
    To show the upper bound we fix  $v,w\in B_2$ and apply Theorem \ref{tw:formulamomenty}
    \begin{align*}
     \lv \sum_{ij} a_{ij}v_iw_j X_{ij} \rv_p  &\approx^\kappa \sup_{t\in B^X_p} \sum_{ij} a_{ij}v_iw_j t_{ij} \\
     &\leq \sup_{t\in B^X_p} \sum_{ij} a_{ij}v_iw_j t_{ij}\1_{|t_{ij}|\leq 1}+\sup_{t\in B^X_p} \sum_{ij} a_{ij}v_iw_j t_{ij}\1_{|t_{ij}|> 1}.    
    \end{align*}
    For any $t> 1$ Markow's inequality implies (we recall our normalization $\Ex |X_{ij}|=1$)
    \[\hat{N}^X_{ij}(t)=-\ln\Pro(|X_{ij}|\geq t) \leq -\ln \Ex |X_{ij}| = 1. \]
    This implies that
    \begin{equation}\label{eq:locdwarf}
    \sup_{t\in B^X_p} \sum_{ij}\1_{|t_{ij}|> 1} \leq p.    
    \end{equation}
The above inequality is not necessarily true under the normalization $\Ex |X_{ij}| = 1$. This is the reason why we temporarily changed the normalization.
     Formula \eqref{eq:locdwarf} yields
    \[\sup_{t\in B^X_p} \sum_{ij} a_{ij}v_iw_j t_{ij}\1_{|t_{ij}|> 1} \leq \sup_{\substack{I\subset [n]\times [n]\\ |I|=\Log n}} \sup_{ t\in B^X_p} \lv (a_{ij}t_{ij})_{(i,j)\in I} \rv_{op}.\]
    Now let $(\eps_{ij})_{i,j \leq n}$ be independent Rademacher random variables (symmetric $\pm 1$ random variables).
    Since
    \[\hat{N}^\eps_{ij}(t)=t^2\1_{|t| \leq 1},\]
Theorem \ref{tw:formulamomenty} implies that (Rademacher random variables satisfies its assumptions with $\kappa$=1)
    \[ \sup_{t\in B^X_p} \sum_{ij} a_{ij}v_iw_j t_{ij}\1_{|t_{ij}|\leq 1}=\sup_{\lv t \rv_2 \leq p, \lv t \rv_\infty \leq 1} \sum_{ij} a_{ij}v_iw_j t_{ij} \lesssim \lv (a_{ij} v_i w_j \eps_{ij})_{i,j \leq n} \rv_{p}.\]
By \cite[Proposition 1.5]{latswiat}
\begin{align*}
 \lv (a_{ij} v_i w_j \eps_{ij})_{i,j \leq n} \rv_{p} &\lesssim \sqrt{\Log \, p} \sup_{\substack{I\subset [n]\times [n]\\ |I|= p}}\lv (a_{ij})_{(i,j)\in I} \rv_{op}\\
 &\leq \sqrt{\Log \, p}\sup_{\substack{I\subset [n]\times [n]\\ |I|=p}} \sup_{ t\in B^X_p} \lv (a_{ij}t_{ij})_{(i,j)\in I} \rv_{op}.    
\end{align*}
The latter inequality holds, since for any $|I|=p$, we have $(\1_{(i,j)\in I})\in B^X_p.$
\end{proof}

\section{The subgaussian case}\label{sec:subgaus}
In this section, we prove a version of Theorem~\ref{tw:glow} for subgaussian
random variables, where there is an additional supremum over (some) set
$T \subset \eR^{n^2}$ (see Theorem~\ref{prop:oszapodgaus}). The presence of the additional set $T$ under the supremum will allow us to
carry out an inductive proof with respect to $\sqrt{\log_2 \kappa}$, where $\kappa$ is the constant in~\eqref{eq:*}. A random variable $X$ is subgaussian if  $\Ex X=0$ and if there exists a constant $K_1\leq \infty$ such that
\begin{equation*}%\label{eq:warpodgaus}
 \lv X \rv_{\psi_2}:=\sup\{t>0: \Ex e^{(tX)^2} \leq 2 \}=K_1<\infty.   
\end{equation*}
In this case, we say that $X$ is $K_1$-subgaussian. We recall the following well-known theorem.
\begin{theorem}\label{thm:podgausschar}
Let $X$ be a random variable such that $\Ex X=0$. The following conditions are equivalent:
\begin{enumerate}
    \item\label{war:1} The random variable $X$ is $K_1$-subgaussian. 
    \item Exists $K_2<\infty$ such that for all $p \geq 1$ we have $\lv X \rv_p \leq K_2 \sqrt{p}\, \Ex |X_1|$.
    \item Exists $K_3<\infty$ such that for all $q \geq p \geq 1$ we have  $\lv X \rv_q \leq K_3 \sqrt{q/p} \lv X \rv_p$.
    \item\label{war:2} Exists $K_4<\infty$ such that for any real $\lambda$ we have $\Ex \exp(\lambda X) \leq 2 \exp(K^2_4 \lambda^2)$.
\end{enumerate}
Furthermore, if any of these cases holds true, then the constants can be chosen such that $K_1 \rown K_2 \rown K_3 \rown K_4$. 
\end{theorem}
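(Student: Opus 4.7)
The plan is to establish a cycle of implications, say $(1)\Rightarrow(3)\Rightarrow(2)\Rightarrow(4)\Rightarrow(1)$, tracking the constants at each step in order to verify the comparability claim. The centering assumption $\Ex X=0$ enters only in the implication $(2)\Rightarrow(4)$; the other three steps are valid for an arbitrary random variable.

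For $(1)\Rightarrow(3)$ I would start from $\Ex\exp(X^2/K_1^2)\leq 2$, apply Markov's inequality to obtain $\Pro(|X|\geq t)\leq 2\exp(-t^2/K_1^2)$, and then integrate by parts together with the Stirling bound $\Gamma(p/2)\leq (p/2)^{p/2}$ to get $\lv X\rv_p\leq CK_1\sqrt{p}$ for every $p\geq 1$. A matching lower bound $\lv X\rv_p\gtrsim K_1\sqrt{p}$ for $p\geq 2$ follows from a Paley--Zygmund argument applied to $X^2$ (using that $\lv X\rv_2$ is comparable to $K_1$ in view of the two-sided subgaussian moment bound at $p=2,4$), and combining the two yields $(3)$. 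Implication $(3)\Rightarrow(2)$ is immediate: specialize to $p=1$ so that $\lv X\rv_1=\Ex|X|$, and take $K_2=K_3$.

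For the main step $(2)\Rightarrow(4)$, I would expand the moment generating function,
\[\Ex e^{\lambda X}=1+\lambda\Ex X+\sum_{k\geq 2}\frac{\lambda^k\Ex X^k}{k!}=1+\sum_{k\geq 2}\frac{\lambda^k\Ex X^k}{k!},\]
where the linear term vanishes by centering. Using $|\Ex X^k|\leq \lv X\rv_k^k\leq (K_2\sqrt{k}\,\Ex|X|)^k$ and $k!\geq (k/e)^k$ bounds each summand by $(e\lambda K_2\Ex|X|/\sqrt k)^k$. For $|\lambda|\leq c/(K_2\Ex|X|)$ the geometric series is dominated by its $k=2$ term, giving $\Ex e^{\lambda X}\leq \exp(CK_2^2(\Ex|X|)^2\lambda^2)$; for larger $|\lambda|$ one instead uses the subgaussian tail $\Pro(|X|\geq t)\lesssim \exp(-ct^2/(K_2\Ex|X|)^2)$ extracted from (2) via Markov, integrating $e^{\lambda t}$ against that tail. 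Finally, $(4)\Rightarrow(1)$ is the standard Chernoff argument: optimizing $\Pro(X>t)\leq 2\exp(K_4^2\lambda^2-\lambda t)$ in $\lambda$ yields $\Pro(|X|>t)\leq 4\exp(-t^2/(4K_4^2))$, and inserting this into $\Ex\exp(\eta X^2)=1+\int_0^\infty 2\eta t\,e^{\eta t^2}\Pro(|X|>t)\,dt$ with, say, $\eta=1/(16K_4^2)$ closes the cycle.

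The main obstacle is $(2)\Rightarrow(4)$: a uniform MGF bound over the whole real line cannot be obtained from the Taylor expansion alone, so one must splice the Taylor estimate valid for small $\lambda$ with a tail-integration argument for large $\lambda$. It is precisely the cancellation of the $k=1$ term, guaranteed by $\Ex X=0$, that allows the exponent to be of order $\lambda^2$ rather than $|\lambda|$; without centering, a uniform bound on the moments would only give $\Ex e^{\lambda X}\leq \exp(C|\lambda|\Ex|X|+C\lambda^2 K_2^2(\Ex|X|)^2)$, which is not of subgaussian type.
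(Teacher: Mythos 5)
The paper does not actually prove Theorem \ref{thm:podgausschar} (it is quoted as a well-known fact, essentially Vershynin's characterization of subgaussian variables), so your argument has to stand on its own, and it has a genuine gap at its first link, $(1)\Rightarrow(3)$. The claimed ``matching lower bound'' $\lv X\rv_p\gtrsim K_1\sqrt p$ for $p\ge 2$ is false: for a Rademacher variable $\lv X\rv_p=1$ for every $p$ while $K_1\asymp 1$, so no bound of order $K_1\sqrt p$ can hold; subgaussianity only gives upper bounds on moment growth, and Paley--Zygmund applied to $X^2$ yields anti-concentration at the scale $\lv X\rv_2$, not growth of higher moments. (Also, only $\lv X\rv_2\lesssim K_1$ is true in general; the reverse comparison you invoke can fail badly, e.g.\ for $X=\pm M$ with probability $e^{-M^2}$ each and $0$ otherwise.) The remaining steps of your cycle --- $(3)\Rightarrow(2)$ by taking $p=1$, the split of $(2)\Rightarrow(4)$ into small and large $\lambda$ with the centering killing the linear term, and $(4)\Rightarrow(1)$ by Chernoff plus tail integration --- are the standard argument and are fine up to constants.

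Moreover, this is not a repairable slip within your strategy, because $(1)\Rightarrow(3)$ with $K_3\lesssim K_1$ is not true as stated. Note first that conditions $(2)$ and $(3)$ are scale invariant while $(1)$ and $(4)$ are not, so the asserted comparability of constants can only make sense under a normalization such as $\Ex|X|\asymp 1$ (which is the regime in which the paper applies the theorem). Even then $(3)$ does not follow: take $X$ symmetric with $|X|=1$ with probability close to one, $|X|=10$ with probability $e^{-100}$, and $|X|=N:=e^{100}$ with probability $e^{-N^2}$. Then $\Ex X=0$, $\Ex|X|\asymp 1$ and $\lv X\rv_{\psi_2}\lesssim 1$, but for $p=N^2/\ln N$ and $q=N^2$ one gets $\lv X\rv_p\asymp 10$, $\lv X\rv_q\asymp N/e$ and $\sqrt{q/p}=10$, forcing $K_3\gtrsim e^{99}$. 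So condition $(3)$ is a genuine moment-regularity hypothesis of the same nature as the SRV$(\kappa)$ condition, strictly stronger than subgaussianity, and cannot be derived from $(1)$, $(2)$ or $(4)$ with comparable constants. What is true, and what your steps $(2)\Rightarrow(4)\Rightarrow(1)$ together with the tail-integration bound $\lv X\rv_p\lesssim K_1\sqrt p$ actually prove, is the equivalence of $(1)$, $(2)$ and $(4)$ with $K_1\asymp K_2\,\Ex|X|\asymp K_4$; if you want $(3)$ in your write-up you must either assume it or add a hypothesis ensuring that $\lv X\rv_p$ is comparable to $\sqrt{p}\,\Ex|X|$ from below, which is exactly what fails in the example above.
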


For the remainder of this section, we will assume that the random variables $(X_{ij})$
are independent, symmetric, and $\eta$-subgaussian. The theorem stated below is a consequence of the result above
and follows specifically from the equivalence between conditions~\ref{war:1} and~\ref{war:2}.
Moreover, the fact that the variables $(X_{ij})$ belong to the class
$\mathrm{SRV}(C\eta)$ is, in turn, an immediate consequence of the theorem
stated below. In particular, they satisfy the assumptions of Lemma~\ref{lem:rozbij}.

\begin{fact}\label{fa:sumujpodgaussy}
 Assume that $X_1,\ldots,X_n$ are independent $\eta$-subgaussian random variables. Then $X=\sum_{i\leq n} a_i X_i$ is $C\eta \sqrt{\sum_i a^2_i}$-subgaussian. In particular
 \[\lv \sum_i a_i X_i \rv_q \lesssim \eta \sqrt{q/p} \lv \sum_i a_i X_i \rv_p.\]
\end{fact}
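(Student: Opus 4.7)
The plan is to verify the standard subgaussian MGF bound for $X=\sum_i a_i X_i$ and then read off both conclusions directly from Theorem~\ref{thm:podgausschar}. The skeleton of the argument is the familiar one for Hoeffding-type bounds, adapted to the $\eta$-subgaussian setting, and the only subtle point is disposing of the factor $2$ appearing in condition~\eqref{war:2} so that independence can be exploited cleanly.

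First I would use independence to write
\[
\Ex \exp\!\left(\lambda \sum_i a_i X_i\right) = \prod_i \Ex \exp(\lambda a_i X_i),
\]
and then derive, for each individual $X_i$, a sharpened MGF bound of the form $\Ex \exp(\mu X_i) \le \exp(C\eta^2 \mu^2)$ without the spurious factor $2$. This sharpening is where symmetry enters: since $X_i$ is symmetric (hence mean zero), $\Ex\exp(\mu X_i) = \Ex \cosh(\mu X_i)$, and expanding the hyperbolic cosine term by term and bounding even moments via condition~2 of Theorem~\ref{thm:podgausschar} (giving $\|X_i\|_{2k}\lesssim \eta\sqrt{2k}$) yields
\[
\Ex\cosh(\mu X_i) \le \sum_{k\ge 0} \frac{\mu^{2k}(C\eta)^{2k}(2k)^k}{(2k)!} \le \exp(C'\eta^2 \mu^2),
\]
after using $(2k)! \ge (2k/e)^{2k}$. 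This is the key technical ingredient: once the factor $2$ is gone from each marginal MGF, taking the product over independent $X_i$ no longer produces a diverging $2^n$.

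Having the clean marginal bound in hand, multiplying over $i$ yields
\[
\Ex \exp\!\left(\lambda \sum_i a_i X_i\right) \le \prod_i \exp\!\big(C'\eta^2 a_i^2 \lambda^2\big) = \exp\!\left(C'\eta^2 \lambda^2 \sum_i a_i^2\right),
\]
which is precisely condition~\eqref{war:2} of Theorem~\ref{thm:podgausschar} for $X=\sum_i a_i X_i$ with parameter $K_4 \lesssim \eta\sqrt{\sum_i a_i^2}$. Applying the equivalence $K_1 \rown K_4$ from that theorem, $X$ is $C\eta\sqrt{\sum_i a_i^2}$-subgaussian, giving the first assertion. For the ``in particular'' statement, I would simply feed $X$ back into the same theorem and use the equivalence of conditions $K_1$ and $K_3$: subgaussianity of $X$ implies $\|X\|_q \le K_3\sqrt{q/p}\|X\|_p$ for all $q\ge p\ge 1$, with $K_3$ controlled by the subgaussian constant, yielding the stated moment comparison up to an absolute multiple of $\eta$.

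The main obstacle, as indicated above, is the bookkeeping around condition~\eqref{war:2}: the factor $2$ in front of $\exp(K_4^2\lambda^2)$ is harmless for a single variable but fatal after a product of $n$ terms, so the reduction to the clean Gaussian-type MGF bound (via symmetry and the moment estimate) is essential. Every other step is just an invocation of Theorem~\ref{thm:podgausschar}.
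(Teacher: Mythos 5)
Your proposal is correct and follows essentially the same route as the paper, which merely remarks that this Fact is a consequence of Theorem~\ref{thm:podgausschar} (the equivalence of conditions~\ref{war:1} and~\ref{war:2}) together with independence of the $X_i$ — precisely the MGF computation you carry out, with the moment comparison then read off from the $K_1\rown K_3$ equivalence. One small caveat: your removal of the factor $2$ via $\Ex\exp(\mu X_i)=\Ex\cosh(\mu X_i)$ uses symmetry, which is a standing assumption in this section but not a hypothesis of the Fact as stated; for merely centered $\eta$-subgaussian variables the clean bound $\Ex\exp(\mu X_i)\le\exp(C\eta^2\mu^2)$ still holds (for instance by symmetrizing with an independent copy and Jensen's inequality), so the argument extends with no real change.
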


As mentioned in the introduction, this section repeats Lata{\l}a's argument from~\cite{latop}.
Accordingly, we adopt the notation used in that work.\\
For the remainder of this section, we fix a symmetric matrix
$A = (a_{ij})_{i,j \leq n}$ and an arbitrary set $T \subset \eR^{n^2}$ (the one mentioned at the beginning of the section). By Remark \ref{rem:da} we may and will assume that $a_{ii}\equiv 0$. We recall that $G_A([n],E_A)$ is a graph such that $(i,j)\in E_A$ if and only if $i\neq j$ and $a_{i,j}\neq 0$, and $d_A$ is  the degree of $G_A$. We define
\begin{itemize}
    \item $\rho=\rho_A$ -  the distance on $[n]$ induced by $E_A$,
    \item $G_r=G_r(A)=([n],E_{A,r})$, $r=1,\ldots,n$ - graphs such that $(i,j)\in E_{A,r}$ iff $\rho(i,j)\leq r$.
\end{itemize}
Clearly $G_1=([n],E_A)$. Moreover, the maximum degree of $G_r$ is at most $d_A+d_A(d_A-1)+\ldots+d_A(d_A-1)^{r-1}\leq d^r_A$. We say that a subset of $[n]$ is $r$-connected if it is connected in $G_r$. \\
We define further
\begin{itemize}
    \item  $\Ik=\mathcal{I}(k,n)$ - the family of all subsets of $[n]$ of cardinality $k$,
    \item $\mathcal{I}_r(k)=\mathcal{I}_r(k,A)$ - the family of all $r$-connected subsets of $[n]$ of cardinality $k$,
    \item $I'=I'(A)$ -  the set of all neighbors of $I$ in $G_1$,
    \item $I''=I''(A)$ - the set of all neighbors of $I'$ in $G_1$.
\end{itemize}
Thus, by definition
\begin{align*}
    &I'=\{j\in [n]:\exists_{i\in I}\, (i,j)\in E_A\}, & &I''=\{ i\in [n]: \exists_{i_0\in I,j\in [n]}\, (i_0,j),(i,j)\in E_A\}.
\end{align*}
Naturally, $I\subset I''$ but in general $I \subsetneq I'$. Moreover $|I'|\leq d_A|I|$ and $|I''|\leq d^2_A|I|$. For a set $I\subset [n]$ and a vertex $j\in [n]$ we write $I \sim_A j$ if $(i,j)\in E_A$ for some $i\in I$.\\
For $k,l=1,\ldots,n$ we define random variables
\[\XX_{kl}=\XX_{kl}(A,T,X):=\frac{1}{\sqrt{kl}} \max_{I\in \Ik, J\in \Il} \sup_{t\in T} \max_{\eta_i,\eta'_j=\pm 1}\sum_{ij}a_{ij}X_{ij}t_{ij}\eta_i \eta'_j.  \]
Set also
\begin{align}
\XX=\XX(A,T,Y):=\max_{1 \leq k,l \leq n} \XX_{k,l}=\max_{\emptyset \neq I,J \subset [n]} \frac{1}{\sqrt{|I||J|}} \max_{\eta_i,\eta_j'=\pm 1} \sup_{t\in T} \sum_{i\in I,j\in J} a_{ij} X_{ij}\eta_i \eta'_j t_{ij}.\label{eq:defxx}
\end{align}
Since $A$ is a symmetric matrix, the parameter $d_A$ controls the number of nonzero
elements among $(a_{ij})_{i \le n}$ (with $j$ fixed) and $(a_{ij})_{j \le n}$
(with $i$ fixed). This allows us to control the number of nonzero terms
in the above sum. The information provided by $d_A$ is sufficient for our purposes. We do not exploit any additional properties of the set $T$. Consequently,
no assumptions on $T$ are required (such as symmetry). We also define the $4$-connected counterpart of $\XX_{kl}$.
\[\XXb_{kl}=\XX_{k,l}(A,T,X):=\frac{1}{\sqrt{kl}} \max_{I\in \Ikc, J\in \Ilc} \sup_{t\in T} \max_{\eta_i,\eta'_j=\pm 1}\sum_{ij}a_{ij}X_{ij}t_{ij}\eta_i \eta'_j.   \]
 The random variables $\XXb_{kl}$ are easier to upper bound than $\XX_{kl}$ because the number of $4$-connected subsets is much smaller than the total number of subsets. However, the next lemma shows that $(\XXb_{kl})_{k,l\leq n}$ controls $(\XX_{kl})_{k,l\leq n}$. First, observe that, in our notation
 \begin{align*}
     \sup_{t\in T, v,w\in B_2} \lv \sum_{ij} a_{ij} t_{ij} X_{ij} v_i w_j \rv_{\Log \, n}=\RTX.
 \end{align*}
 
\begin{lemma}\label{lem:pierwszy}
 It is true that
    \[\Ex \XX = \Ex \max_{k,l} \XX_{k,l} \lesssim^\eta \max_{1\leq k',l' \leq n} \Ex \XXb_{k',l'}+\RTX.  \]
\end{lemma}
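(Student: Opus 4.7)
The plan is to combine a deterministic pathwise inequality comparing $\XX_{k,l}$ with the $4$-connected quantities $\XXb_{k',l'}$ with a concentration-style bound that passes from $\Ex \max_{k',l'}\XXb_{k',l'}$ to $\max_{k',l'}\Ex\XXb_{k',l'}$ via Lemma~\ref{lem:rozbij}.

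\textbf{Step 1: Pointwise bipartite decomposition.} Fix $I\in\mathcal{I}(k)$, $J\in\mathcal{I}(l)$, $t\in T$, and signs $\eta,\eta'$. Only pairs $(i,j)\in I\times J$ with $a_{ij}\neq 0$ contribute to $S:=\sum_{i\in I, j\in J}a_{ij}X_{ij}t_{ij}\eta_i\eta'_j$, so form the bipartite graph on $I\sqcup J$ (treated as disjoint vertex classes) with edges $\{(i,j):a_{ij}\neq 0\}$ and decompose it into connected components $(I^{(a)},J^{(a)})_{a=1}^m$ with $|I^{(a)}|=k_a$, $|J^{(a)}|=l_a$, $\sum_a k_a\leq k$, $\sum_a l_a\leq l$. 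Any two rows $i_0,i_s\in I^{(a)}$ are joined by an alternating path $i_0,j_0,i_1,\ldots,i_s$ inside the component; since each bipartite edge realises distance $1$ in $G_1$, one has $d_{G_1}(i_r,i_{r+1})\leq 2$, so $I^{(a)}$ is $2$-connected, hence $4$-connected, and similarly for $J^{(a)}$; thus $I^{(a)}\in\mathcal{I}_4(k_a)$, $J^{(a)}\in\mathcal{I}_4(l_a)$. Because pairs in different components have $a_{ij}=0$, the sum splits, and sign-invariance of the inner $\max_{\eta,\eta'}$ in the definition of $\XXb$ gives
\[
|S| = \Big|\sum_a \sum_{i\in I^{(a)}, j\in J^{(a)}}a_{ij}X_{ij}t_{ij}\eta_i\eta'_j\Big| \leq \sum_{a=1}^m \sqrt{k_a l_a}\,\XXb_{k_a, l_a}.
\]
By Cauchy--Schwarz $\sum_a\sqrt{k_a l_a}\leq \sqrt{(\sum k_a)(\sum l_a)}\leq\sqrt{kl}$, so dividing by $\sqrt{kl}$ yields the pointwise inequality $\XX_{k,l}\leq \max_{1\leq k',l'\leq n}\XXb_{k',l'}$, and taking the maximum over $(k,l)$ gives $\XX\leq \max_{k',l'}\XXb_{k',l'}$.

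\textbf{Step 2: Expectation of the maximum via Lemma~\ref{lem:rozbij}.} Each $\XXb_{k',l'}$ is a supremum of linear forms $\sum_{ij}b_{ij}X_{ij}$ as $b$ ranges over
\[
T_{k',l'}=\Big\{\Big(\tfrac{a_{ij}t_{ij}\eta_i\eta'_j}{\sqrt{k'l'}}\1_{i\in I,\,j\in J}\Big)_{ij}: I\in\mathcal{I}_4(k'),\,J\in\mathcal{I}_4(l'),\,t\in T,\,\eta,\eta'\in\{\pm 1\}\Big\},
\]
and the substitution $v_i=\eta_i\1_{i\in I}/\sqrt{k'}$, $w_j=\eta'_j\1_{j\in J}/\sqrt{l'}$ shows $T_{k',l'}\subset\{(a_{ij}v_iw_jt_{ij})_{ij}: v,w\in B_2,\,t\in T\}$. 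Since $\eta$-subgaussian variables lie in the class required by Lemma~\ref{lem:rozbij} (as recorded in the paragraph preceding Fact~\ref{fa:sumujpodgaussy}), that lemma applied to the $n^2$ index sets $\{T_{k',l'}\}$ gives
\[
\Ex\max_{k',l'}\XXb_{k',l'}\lesssim^\eta \max_{k',l'}\Ex\XXb_{k',l'} + \sup_{b,b'\in\bigcup_{k',l'}T_{k',l'}}\lv\sum_{ij}(b_{ij}-b'_{ij})X_{ij}\rv_{\Log(n^2)}.
\]
The deviation term is bounded by $2\sup_{v,w\in B_2,\,t\in T}\lv\sum_{ij}a_{ij}v_iw_jt_{ij}X_{ij}\rv_{\Log(n^2)}$, and the identity $\Log(n^2)=2\Log(n)$ together with the subgaussian moment inequality $\lv Y\rv_{2p}\lesssim^\eta \lv Y\rv_p$ (Fact~\ref{fa:sumujpodgaussy}) gives a further bound by $\lesssim^\eta \RTX$. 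Combining with Step~1 proves the lemma.

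The main obstacle is Step~1: a naive attempt to $4$-decompose $I$ and $J$ separately in $G_4$ fails, because subsets of $4$-connected sets need not be $4$-connected. The correct move is to decompose the bipartite \emph{interaction} graph between $I$ and $J$; the alternating-path argument then automatically upgrades incidences in $G_A$ to $2$-connectedness of each part, and Cauchy--Schwarz delivers exactly the correct aggregation by $\sqrt{kl}$. Step~2 is routine given Lemma~\ref{lem:rozbij}, once one recognises the natural $B_2$-parametrization through normalized signed indicators, which lets the deviation term be absorbed into $\RTX$.
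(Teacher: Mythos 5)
Your proof is correct and follows essentially the same route as the paper: a pathwise decomposition of the sum over $I\times J$ into $4$-connected blocks, aggregated by Cauchy--Schwarz to give $\XX \leq \max_{k',l'}\XXb_{k',l'}$, followed by Lemma~\ref{lem:rozbij} combined with the subgaussian moment comparison of Fact~\ref{fa:sumujpodgaussy} to pass from $\Ex\max_{k',l'}\XXb_{k',l'}$ to $\max_{k',l'}\Ex\XXb_{k',l'}$ at the cost of a term $\lesssim^\eta \RTX$. The only (harmless) difference is the choice of decomposition: you take connected components of the bipartite interaction graph on $I\sqcup J$, whose row and column parts are both $2$-connected, whereas the paper takes the $G_2$-components of $I\cap J'$ and pairs each with its neighborhood in $J$, which it then shows to be $4$-connected; both yield the same pointwise bound.
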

\begin{proof}
   Fix $I\in \Ik, J\in \Il$. Let $I_1,\ldots,I_r$ be the connected components of $I\cap J'$ in $G_2$, and let $J_u:=J\cap I'_u$. The sets $J_1,\ldots,J_r$ are disjoint. 
  They are also $4$-connected subsets of $J$; otherwise there would exist a nonempty set $V \subsetneq J_u$ such that
$\rho_A(V, J_u \setminus V) \ge 4$. Let $\tilde{V}$ be the set of neighbors of $V$ in $I_u$. Then $\emptyset \neq \tilde{V} \subsetneq I_u$ and
$\rho_A(\tilde{V}, I_u \setminus \tilde{V}) \ge 2$, which contradicts the $2$-connectivity of $I_u$. Hence, for every  $t\in T$ and $\eta_i,\eta_j'=\pm 1$ we have
   \begin{multline*}
       \sum_{i\in I,j\in J} t_{ij}a_{ij}X_{ij}\eta_i \eta_j'=\sum_{i\in I\cap J',j\in J} t_{ij}a_{ij}X_{ij}\eta_i\eta'_j=\sum_{u=1}^r \sum_{i\in I_u,j\in J_u}t_{ij}a_{ij}X_{ij}\eta_i \eta_j' \\
       \leq \sum_{u=1}^r \XXb_{|I_u|,|J_u|} \sqrt{|I_u||J_u|}\leq \max_{k',l'}\XXb_{k',l'}\sum_{u=1}^r \sqrt{|I_u||J_u|}\\
       \leq \max_{k',l'}\XXb_{k',l'} \left(\sum_{u=1}^r |I_u| \right)^{1/2}\left(\sum_{u=1}^r |J_u| \right)^{1/2}=\max_{k',l'}\XXb_{k',l'}\sqrt{|I||J|}.
   \end{multline*}
   Taking the supremum over all $k,l \leq n$, sets $I\in \Ik$, $J\in \Il$, signs $\eta_i,\eta'_j =\pm 1 $ and $t\in T$ yields
   \begin{equation*}%\label{eq:loc1}
     \XX_{k,l} \leq \max_{k',l'}\XXb_{k',l'}.
   \end{equation*}
   Since the right side does not depend on $k,l$ we have
   \begin{equation}\label{eq:loc1o}
     \Ex \XX =\Ex \max_{k,l} \XX_{k,l} \leq \Ex \max_{k,l} \XXb_{k,l}.  
   \end{equation}
   
Fact \ref{fa:sumujpodgaussy} implies that
   \begin{align*}
    &\max_{k,l}\max_{I\in \Ikc,J\in \Ilc} \max_{\eta_i,\eta_j'=\pm 1} \sup_{t\in T} \frac{1}{\sqrt{kl}} \lv \sum_{i\in I,j\in J} a_{ij}X_{ij} t_{ij} \eta_i \eta'_j \rv_{\Log \, n^2}\\
    &\leq  \sup_{t\in T, v,w\in B_2} \lv \sum_{ij} a_{ij} t_{ij} X_{ij} v_i w_j \rv_{2\Log \, n} \lesssim^\eta \RTX .
   \end{align*}
By Lemma \ref{lem:rozbij} we have that (the random variables $(X_{ij})_{i,j\leq n}$ belong to the $SRV(C\eta)$ class)
\[\Ex \max_{k',l'}\XXb_{k',l'} \lesssim^\eta \max_{k',l'} \Ex\XXb_{k',l'}+\RTX . \]
The assertions follows by \eqref{eq:loc1o} and the preceding inequality.
\end{proof}

\begin{lemma}\label{lem:drugiwstep}
   For any $1,\leq k,l\leq n$, we have
    \begin{align*}
\Ex \XXb_{k,l} \leq \Ex \sup_{t\in T} \lv (a_{ij}X_{ij}t_{ij} \rv_{op} \lesssim^\eta d_A \Ex \sup_{t\in T} \max_{ij}|a_{ij}X_{ij}t_{ij}|.
    \end{align*}
\end{lemma}
\begin{proof}
Fix $t\in T$ and observe that for any $v,w\in B_2$
\begin{align*}
\left|\sum_{i\neq j} a_{ij}X_{ij}t_{ij}v_iw_j \right|&\leq \max_{ij}|a_{ij}X_{ij}t_{ij}| \sum_{ij} \1_{a_{ij}\neq 0} \frac{v^2_i+w^2_j}{2} \\
&=\frac{\max_{ij}|a_{ij}X_{ij}t_{ij}|}{2}\left( \sum_{i} v^2_i \sum_j \1_{a_{ij}\neq 0}+ \sum_{j} w^2_j \sum_i \1_{a_{ij}\neq 0} \right) \\
&\leq d_A \max_{ij}|a_{ij}X_{ij}t_{ij}|.
\end{align*}
The assertion follows by taking the supremum over $t \in T$ and expectations. 
\end{proof}

\begin{lemma}\label{lem:drugi}
   For any $1\leq k,l\leq n$, we have
    \begin{multline*}
\Ex \XXb_{k,l} \lesssim^\eta \sup_{v,w \in B_2}\Ex \sup_{t\in T}\sum_{ij}a_{ij} X_{ij} t_{ij}v_iw_j + \RTX +\Ex \max_{ij} \sup_{t\in T} \left| a_{ij} X_{ij} t_{ij}\right|\\
+\sqrt{\Log \, d_A} \left(\MTI+\MTJ\right).
    \end{multline*}
\end{lemma}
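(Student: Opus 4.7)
We adapt Latała's argument from \cite[Proposition~4.4]{latop}, substituting subgaussian concentration (Fact~\ref{fa:sumujpodgaussy}) for the Bernstein-type estimate used in the Rademacher case. The proof consists of a single application of Lemma~\ref{lem:rozbij} over a carefully chosen finite index set, followed by a two-regime bound on the resulting weak-moment term.

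\textbf{Setup and application of Lemma~\ref{lem:rozbij}.} For $I \in \Ikc$ and a sign pattern $\eta \in \{\pm 1\}^I$, define $v(\eta) \in B_2$ by $v(\eta)_i = \eta_i/\sqrt{k}$ for $i \in I$ and zero otherwise, so that $\|v(\eta)\|_2 = 1$; set $V_I := \{v(\eta)\}$, define $W_J$ analogously for $J \in \Ilc$, and write $S(t, v, w) := \sum_{i,j} a_{ij} t_{ij} v_i w_j X_{ij}$. Then $\XXb_{k,l} = \max_{I, J, v \in V_I, w \in W_J} \sup_{t \in T} S(t, v, w)$. We index the union by the tuples $(I, J, v, w)$: by Lemma~\ref{lem:grafzlicz} applied to the graph $G_4$ (of maximum degree $\leq d_A^4$), there are at most $m \leq n^2 (4 d_A^4)^{k+l-2} \cdot 2^{k+l}$ such tuples, and hence $\Log m \lesssim \Log n + (k+l) \Log d_A$. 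Since the $X_{ij}$ belong to $\mathrm{SRV}(C\eta)$, Lemma~\ref{lem:rozbij} yields
$$\Ex \XXb_{k,l} \lesssim^\eta \sup_{v, w \in B_2} \Ex \sup_{t \in T} S(t, v, w) + W,$$
where $W \leq 2 \sup_{I, J, v \in V_I, w \in W_J, t \in T} \|S(t, v, w)\|_{\Log m}$; the enlargement $V_I, W_J \subset B_2$ is used in the first term.

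\textbf{Two-regime estimate for $W$.} For $v \in V_I, w \in W_J$ (so $v_i^2 = 1/k$ on $I$ and $w_j^2 = 1/l$ on $J$), a direct calculation gives
$$\sum_{i,j}(a_{ij} t_{ij} v_i w_j)^2 = \tfrac{1}{kl}\sum_{i \in I, j \in J}(a_{ij} t_{ij})^2 \leq \min(M_1(t)^2/l, M_2(t)^2/k) \leq \tfrac{(\MTI + \MTJ)^2}{\max(k, l)},$$
where $M_1(t) := \max_i \|(a_{ij} t_{ij})_j\|_2$ and analogously $M_2(t)$. Fact~\ref{fa:sumujpodgaussy} then yields two estimates valid for any $p \geq 1$: (a) $\|S(t, v, w)\|_p \lesssim^\eta \sqrt{p/\max(k, l)}(\MTI + \MTJ)$, and (b) $\|S(t, v, w)\|_p \lesssim^\eta \sqrt{p/\Log n}\,\|S(t, v, w)\|_{\Log n} \leq \sqrt{p/\Log n}\,\RTX$. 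Take $p = \Log m$ and split on regimes: if $(k + l) \Log d_A \leq \Log n$, then $\Log m \lesssim \Log n$ and (b) gives $\|S\|_{\Log m} \lesssim \RTX$; otherwise $\Log m \lesssim (k + l) \Log d_A$, and (a) combined with $(k + l)/\max(k, l) \leq 2$ gives $\|S\|_{\Log m} \lesssim \sqrt{\Log d_A}(\MTI + \MTJ)$. In both cases, $W \lesssim^\eta \RTX + \sqrt{\Log d_A}(\MTI + \MTJ)$, which together with the preceding display proves the lemma.

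\textbf{Main obstacle.} The critical balance occurs in the two-regime step. The restriction to sign vectors $V_I$ (rather than the full $B_2(I)$) produces the factor $1/\sqrt{\max(k, l)}$ in bound (a), which combined with $(k+l)/\max(k, l) \leq 2$ precisely cancels the $\sqrt{k+l}$ growth from the $d_A^{O(k+l)}$ count of 4-connected pairs, leaving only the desired $\sqrt{\Log d_A}$ overhead. Both regimes are necessary: the $\Log n$-scale is captured only by $\RTX$, while the $\Log d_A$-scale is captured only by $\MTI + \MTJ$, and a naive single-regime estimate leaves behind an uncontrolled $\sqrt{\Log n}(\MTI + \MTJ)$ term.
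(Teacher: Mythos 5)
Your proof is correct and follows essentially the same route as the paper: decompose $\XXb_{k,l}$ into pieces indexed by $4$-connected sets and sign patterns, apply Lemma~\ref{lem:rozbij} with the count from Lemma~\ref{lem:grafzlicz}, and bound the weak-moment term via Fact~\ref{fa:sumujpodgaussy} in the two regimes $(k+l)\Log d_A \lessgtr \Log n$. The only cosmetic differences are that you phrase the $\Log d_A$-regime bound through $\min(M_1^2/l,M_2^2/k)$ rather than the paper's case split $k\le l$ versus $l\le k$, and you do not need the paper's separate treatment of $d_A\le 2$.
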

\begin{proof}
If $d_A\leq 2$ then the assertion follows from Lemma \ref{lem:drugiwstep}.  Assume that $d_A\geq 3$. Clearly
\begin{align*}
  \max_{I\in \Ikc,\, J\in \Ilc} \max_{\eta_i,\eta'_j=\pm 1} \frac{1}{\sqrt{kl}}\Ex \sup_{t\in T} \sum_{i\in I} \sum_{j\in J} a_{ij}t_{ij} X_{ij} \eta_i \eta'_j \leq \sup_{v,w \in B_2}\Ex \sup_{t\in T}\sum_{ij}a_{ij} X_{ij} t_{ij}v_iw_j .
\end{align*}
By Lemma   \ref{lem:grafzlicz}, we have that $2^k| \Ikc| \leq n (8d^4_A)^k\leq nd_A^{6k}$. Similarly, $2^l |\Ilc| \leq n d_A^{6l}$. Observe that
    \[\Log(n^2 d_A^{6(k+l)}) \lesssim \begin{cases}
       \Log \, n &n^2 \geq (d_A)^{6(k+l)} \\
       (k+l) \Log \, d_A &\textrm{otherwise.}
    \end{cases}\]
In the first case (using Fact \ref{fa:sumujpodgaussy})
\begin{multline*}
  \max_{I\in \Ikc,J\in \Ilc} \max_{\eta_i,\eta'_j=\pm 1} \sup_{t\in T}\frac{1}{\sqrt{kl}}\lv \sum_{i\in I} \sum_{j\in J} a_{ij}t_{ij} X_{ij} \eta_i \eta'_j  \rv_{\Log(n^2 d_A^{6(k+l)})}\\
  \lesssim^\eta \RTX.  
\end{multline*}
Since $X_{ij}$ is an $\eta$-subgaussian random variable that satisfies $\Ex |X_{ij}|=1$ (the normalization), we have that
\[\Ex X^2_{ij}\leq C^2\eta^2 \left(\Ex |X_{ij}|\right)^2=C\eta^2.\]
Thus, in the second case (i.e. $n^2<(d_A)^{6(k+l)}$),  Fact \ref{fa:sumujpodgaussy} implies that, for  $k\leq l$ (we bound the higher moment by the second moment)
\begin{align*}
    \max_{I\in \Ikc,\,J\in \Ilc} \max_{\eta_i,\eta'_j=\pm 1} \sup_{t\in T}\frac{1}{\sqrt{kl}}\lv \sum_{i\in I} \sum_{j\in J} a_{ij}t_{ij} X_{ij} \eta_i \eta'_j  \rv_{\Log(n^2 d_A^{6(k+l)})}\\
     % \lesssim^\eta \max_{I\in \Ik,J\in \Ilc} \max_{\eta_i,\eta'_j=\pm 1} \sup_{t\in T}\frac{\sqrt{(k+l) \Log(d_A)}}{\sqrt{kl}}\lv \sum_{i\in I} \sum_{j\in J} a_{ij}t_{ij} X_{ij} \eta_i \eta'_j  \rv_{2}\\
    \lesssim^\eta \max_{I\in \Ikc,\,J\in \Ilc}  \sup_{t\in T}\frac{\sqrt{(k+l) \Log(d_A)}}{\sqrt{kl}} \sqrt{\sum_{i\in I} \sum_{j\in J} a^2_{ij}t^2_{ij} \Ex X^2_{ij} } \\
    \lesssim^\eta \max_{I\in \Ikc}  \sup_{t\in T}\sqrt{\Log(d_A)}\sqrt{\frac{\sum_{i\in I} \sum_{j} a^2_{ij}t^2_{ij}}{k} }\leq \sqrt{\Log d_A }\MTI.
\end{align*}
If $l\leq k$ then by an analogous argument
\begin{multline*}
   \max_{I\in \Ikc,\,J\in \Ilc} \max_{\eta_i,\eta'_j=\pm 1} \sup_{t\in T}\frac{1}{\sqrt{kl}}\lv \sum_{i\in I} \sum_{j\in J} a_{ij}t_{ij} X_{ij} \eta_i \eta'_j  \rv_{\Log(n^2 (64d_A)^{k+l})} \\
   \lesssim^\eta \sqrt{\Log \, d_A}\MTJ.  
\end{multline*}

The assertion follows by Lemma \ref{lem:rozbij}.

\end{proof}

\begin{theorem}\label{prop:oszapodgaus}
Let $A=(a_{ij})_{i,j\leq n}$ be a symmetric matrix. Consider arbitrary $T\subset \eR^{n^2}$. Assume that $(X_{ij})_{i,j\leq n}$ 
 are independent, symmetric, $\eta$-subgaussian random variables. Then, we have
\begin{align}
    \Ex \sup_{t\in T} &\lv (a_{ij} X_{ij} t_{ij})_{i,j \leq n} \rv_{op} \lesssim^\eta \Log^{3/2}(d_A) \Biggl(\sup_{v,w \in B_2}\Ex \sup_{t\in T}\sum_{ij}a_{ij} X_{ij} t_{ij}v_iw_j +\RTX \nonumber \\
  & +\MTI +\MTJ+\Ex  \max_{ij}\sup_{t\in T} |a_{ij}t_{ij} X_{ij}|\Biggl). \label{eq:loctw1}
\end{align}
In particular
\begin{multline}
    \Ex \sup_{t\in T} \lv (a_{ij} X_{ij} t_{ij})_{i,j \leq n} \rv_{op}  \lesssim^\eta \Log^{3/2}(d_A) \Biggl(\sup_{ v,w\in B_2} \lv \sup_{t\in T}\sum_{ij} a_{ij} t_{ij} X_{ij} v_i w_j \rv_{\Log \, n} \\
   +\sup_{t\in T} \lv \AT \rv_{op}+\Ex \max_{ij} \sup_{t\in T} |a_{ij}t_{ij} X_{ij}|\Biggl). \label{eq:loctw2}
\end{multline}
\end{theorem}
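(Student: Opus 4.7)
The plan is to reduce the operator norm $\sup_{t\in T}\lv (a_{ij}t_{ij}X_{ij})_{i,j\leq n}\rv_{op}$ to the combinatorial quantity $\XX$ from \eqref{eq:defxx} at the cost of a $\Log(d_A)$ multiplicative factor, and then to combine Lemmas \ref{lem:pierwszy} and \ref{lem:drugi} to estimate $\Ex \XX$. This follows Lata\l a's blueprint in the proof of \cite[Proposition~4.4]{latop}, with the Rademacher variables replaced by the $\eta$-subgaussian $(X_{ij})$.

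\textbf{Step 1 (discretization).} For $v \in B_2$ I would split its support into dyadic level sets $I_k := \{i : 2^{-k-1} < |v_i| \leq 2^{-k}\}$ (satisfying $|I_k| \leq 2^{2k+2}$) and do the same thing for $w$ via sets $J_l$. I would truncate at levels $k, l \leq K := C\Log(d_A)$ and approximate $v_i$ by $\pm 2^{-k}$ on $I_k$, $w_j$ by $\pm 2^{-l}$ on $J_l$. The resulting bilinear expression splits into a double sum over pairs $(k,l)$, each block bounded by $2^{-k-l}\sqrt{|I_k||J_l|}\,\XX_{|I_k|,|J_l|}$, so that
\[
\sum_{ij}a_{ij}t_{ij}X_{ij}v_iw_j \leq \XX \cdot \Big(\sum_{k\leq K}2^{-k}\sqrt{|I_k|}\Big)\Big(\sum_{l\leq K}2^{-l}\sqrt{|J_l|}\Big).
\]
A Cauchy--Schwarz bound, combined with $\sum_k 2^{-2k}|I_k| \leq 4\lv v\rv_2^2 \leq 4$ and $K \lesssim \Log(d_A)$, controls each bracket by $\lesssim \sqrt{\Log d_A}$, so the whole bilinear sum is $\lesssim \Log(d_A)\,\XX$. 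The contribution of truncated coordinates ($|v_i|,|w_j|<2^{-K}$) is absorbed by $\Ex \sup_{t}\max_{ij}|a_{ij}t_{ij}X_{ij}|$, exploiting that each row and column of $A$ is supported on at most $d_A$ indices.

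\textbf{Step 2 (apply the lemmas).} Chaining Lemmas \ref{lem:pierwszy} and \ref{lem:drugi} directly gives
\[
\Ex \XX \lesssim^\eta \sup_{v,w\in B_2}\Ex \sup_{t\in T}\sum_{ij}a_{ij}t_{ij}X_{ij}v_iw_j + \RTX + \sqrt{\Log d_A}\,(\MTI+\MTJ),
\]
and multiplying by the $\Log(d_A)$ from Step~1 produces the $\Log^{3/2}(d_A)$ prefactor of \eqref{eq:loctw1}. For \eqref{eq:loctw2} I would use Jensen's inequality to replace $\sup_{v,w}\Ex \sup_t \sum$ by the moment $\sup_{v,w}\lv \sup_t\sum\rv_{\Log n}$, interchange the two suprema to get $\RTX \leq \sup_{v,w}\lv \sup_t\sum\rv_{\Log n}$, and note that $\MTI,\MTJ \leq \sup_{t\in T}\lv \AT\rv_{op}$ by testing the operator norm with the standard basis vectors.

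\textbf{Main obstacle.} The delicate step is the truncation at level $K=C\Log(d_A)$ in Step~1: one needs to show that the contribution of coordinates with $|v_i|$ or $|w_j|$ below $2^{-K}$ is indeed controlled by $\Ex \sup_{t\in T}\max_{ij}|a_{ij}t_{ij}X_{ij}|$ without incurring any further $\Log$ losses. The sparsity of $A$ (at most $d_A$ non-zero entries in every row and column) is essential here: it is what allows an $\ell_2$-type bound on the small-entry part of $v$ and $w$ to be traded for an $\ell_\infty$-type estimate on the entries of $(a_{ij}t_{ij}X_{ij})$.
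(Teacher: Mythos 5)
Your overall architecture is the right one and matches the paper's: discretize $v,w$ into dyadic level sets, retain a block-diagonal-in-levels part that is bounded by $\Log(d_A)\cdot \XX$ via Cauchy--Schwarz, push the rest into $\Ex\sup_{t\in T}\max_{ij}|a_{ij}t_{ij}X_{ij}|$ using the sparsity $\sum_j \1_{a_{ij}\neq 0}\leq d_A$, and then chain Lemmas \ref{lem:pierwszy} and \ref{lem:drugi} to pick up the extra $\sqrt{\Log d_A}$. Step 2 and your reduction of \eqref{eq:loctw2} to \eqref{eq:loctw1} are fine. However, there is a genuine gap exactly at the point you flag as the ``main obstacle'': your truncation is by \emph{absolute} level ($|v_i|,|w_j|$ below $2^{-K}$ with $K=C\Log d_A$ are discarded), and for that truncation the claimed absorption into $\Ex\sup_{t\in T}\max_{ij}|a_{ij}t_{ij}X_{ij}|$ is false as a pointwise bound. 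Take $|v_i|=2^{-K-1}$ on roughly $4^{K}$ coordinates (just below your threshold, hence discarded) and $|w_j|=2^{-K}$ on roughly $4^{K}$ coordinates (just above, hence retained), with $A$ supported on a $d_A$-regular bipartite pattern between the two supports. The discarded bilinear form then has about $4^{K}d_A$ nonzero terms, each of size $\approx 4^{-K}|a_{ij}t_{ij}X_{ij}|$, so in the worst case it is of order $d_A\max_{ij}|a_{ij}t_{ij}X_{ij}|$ --- off by a factor $d_A$, not a constant. The sparsity argument only trades $\ell_2$ for $\ell_\infty$ when the two coordinates being paired have ratio at least $d_A$ (so that $|v_i||w_j|\lesssim (v_i^2\vee w_j^2)/d_A$), and an absolute threshold does not guarantee this for pairs straddling the threshold.

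The paper avoids this by truncating according to the \emph{level difference} rather than the absolute level: it keeps only pairs $(k,l)$ with $|k-l|<\Log d_A$ (still only $\lesssim\Log d_A$ values of $l$ per $k$, so the retained part is again $\lesssim \Log(d_A)\,\XX$ by the same Cauchy--Schwarz computation you use), and on the discarded pairs, say $l\geq k+\Log d_A$, one has $|v_i||w_j|\leq e^{-2k}/d_A$, after which $\sum_j\1_{a_{ij}\neq 0}\leq d_A$ gives
\begin{equation*}
\sum_k\sum_{l\geq k+\Log d_A}e^{-k-l}\max_{\eta_i,\eta_j'}\sum_{i\in I_k(v),\,j\in J_l(w)}a_{ij}t_{ij}X_{ij}\eta_i\eta_j'\;\lesssim\;\max_{ij}|a_{ij}t_{ij}X_{ij}|\,\lv v\rv_2^2 ,
\end{equation*}
and symmetrically for $k\geq l+\Log d_A$ (here symmetry of $A$ gives the column sparsity). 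If you replace your Step 1 truncation by this ratio-based split, the rest of your argument goes through and reproduces the paper's proof.
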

\begin{proof}
W.l.o.g. we may assume that $a_{ii}=0$ for $i\leq n$ (see Remark \ref{rem:da}). Clearly
\[\MTI+\MTJ \leq 2 \sup_{t\in T} \lv \AT \rv_{op}, \]
so it is enough to prove \eqref{eq:loctw1}. For $v,w\in B_2$ and integers $k,l$ we define
  \[I_k(s):=\{i\leq n: e^{-k-1}<|s_i|\leq e^{-k}\},\ J_l(t)=\{j\leq n: e^{-l-1}<|t_j|\leq e^{-l}\}.\]
  Observe that for any $v,w \in B_2$, $k,l \in \eN$ and $t\in T$
  \[\sum_{i\in I_k(s),\, j\in J_l(t)} a_{ij}t_{ij}X_{ij}v_iw_j \leq e^{-k-l} \max_{\eta_i,\eta_j'=\pm 1} \sum_{i\in I_k(s),\,j\in J_l(t)} a_{ij}t_{ij}X_{ij}\eta_i\eta'_j ,\]
  therefore
  \[\sup_{v,w\in B_2} \sup_{t\in T} \sum_{ij} a_{ij}t_{ij}X_{ij}v_iw_j  \leq \sup_{v,w \in B_2} \sup_{t\in T}\sum_{k,l}e^{-k-l}\max_{\eta_i,\eta_j'=\pm 1} \sum_{i\in I_k(v),j\in J_l(w)} a_{ij}t_{ij}X_{ij}\eta_i\eta'_j. \]
For fixed $v,w\in B_2$ and $t\in T$, we have
\begin{align*}
  &\sum_k \sum_{l \geq k+\Log \, d_A} e^{-k-l} \max_{\eta_i, \eta_j'} \sum_{i\in I_k(v), j\in J_l(w)} a_{ij}t_{ij}X_{ij}\eta_i \eta'_j \\
  &\leq \sum_k e^{-k} \sum_{i\in I_k(v)}   \sum_j |a_{ij}t_{ij}X_{ij}| \sum_{l \geq k+\Log \, d_A}e^{-l}\1_{j\in J_l(w)}\\
  &\lesssim \sum_k  \sum_{i\in I_k(v)}   \sum_j  |a_{ij}t_{ij}X_{ij}|e^{-2k-\Log \, d_A}\leq \max_{ij}\sup_{t\in T} |a_{ij}t_{ij}X_{ij}|\sum_k  \sum_{i\in I_k(v)}e^{-2k} \frac{\sum_j \1_{a_{ij}\neq 0}}{d_A}  \\
  &\leq \max_{ij}\sup_{t\in T}|a_{ij}t_{ij}X_{ij}|\sum_k  \sum_{i\in I_k(v)} e^2 v^2_i= \max_{ij}\sup_{t\in T}|a_{ij}t_{ij}X_{ij}| e^2 \lv v \rv^2_2,
\end{align*}
where in the last inequality, we used the fact that $\sum_j \1_{a_{ij}\neq 0} \leq d_A$ and the definition of the set $I_k(v)$. Since the matrix $A$ is symmetric, we also have that $\sum_i \1_{a_{ij}\neq 0} \leq d_A$. Thus,  in the same way, we can show that 
\[\sup_{t\in T}\sum_l \sum_{k \geq l+\Log \, d_A} e^{-k-l} \max_{\eta_i, \eta_j'} \sum_{i\in I_k(v), j\in J_l(w)} a_{ij}t_{ij}X_{ij}\eta_i \eta'_j\lesssim \max_{ij}\sup_{t\in T}|a_{ij}t_{ij}X_{ij}|. \]
Moreover, for any $t\in T$ and $v,w\in B_2$ (see \eqref{eq:defxx})
\begin{align*}
   \sum_{k,l: |k-l|<\Log \, d_A}e^{-k-l} \max_{\eta_i, \eta_j'} \sum_{i\in I_k(v), j\in J_l(w)} a_{ij}t_{ij}X_{ij}\eta_i \eta'_j \leq \XX   \sum_{k,l: |k-l|<\Log \, d_A}e^{-k-l} \sqrt{|I_k(v)||J_l(w)|}.
\end{align*}
For any fixed $r$
\begin{align*}
    \sum_k e^{-k-(k-r)} \sqrt{|I_k(v)||J_{k+r}(w)|}&\leq \left(\sum_k e^{-2k} |I_k(v)| \right)^{1/2}\left(\sum_k e^{-2(k+r)}|J_{k+r}(w)| \right)^{1/2}\\
    &\leq e^2 \lv v \rv_2 \lv w \rv_2.
\end{align*}
Hence
\begin{align*}
  \sup_{v,w\in B_2} \sup_{t\in T} \sum_{ij} a_{ij}X_{ij}t_{ij}v_iw_j &\lesssim \Log \, d_A  \XX +    \max_{ij}\sup_{t\in T}|a_{ij}t_{ij}X_{ij}|.
\end{align*}
The assertion follows from taking expectations on both sides and applying Lemmas \ref{lem:pierwszy} and \ref{lem:drugi}.
\end{proof}

\section{Proof of Theorem \ref{tw:glow}}
We prove Theorem \ref{tw:glow} by induction on $\wym=\lceil 2\cdot \log_2 \kappa \rceil$ (we recall that $\kappa$ is such that $\lv X_{ij} \rv_{2p} \leq \kappa \lv X_{ij} \rv_p$ for any $p \geq 1$). If $\wym=1$ then the random variables $(X_{ij})_{i,j\leq n}$ satisfies $\lv X_{ij} \rv_{2p} \leq \sqrt{2} \lv X_{ij} \rv_p$ for any $p\geq 1$. This easily implies that $\lv X_{ij} \rv_{p} \leq C \sqrt{p} \lv X_{ij} \rv_1$. Therefore, the base case is that of $C$-subgaussian random variables (where $C$ does not depend on $\kappa$). This is the content of the next theorem.
\begin{theorem}\label{tw:bazaindukcji}
    Assume that $X_{ij}$ are independent, symmetric, $C$-subgaussian random variables, such that $\Ex |X_{ij}|=1$. Then for any symmetric matrix $(a_{ij})_{i,j\leq n}$, we have
    \begin{align*}
    \Ex \lv \AX \rv_{op}\lesssim \Log^{3/2}(d_A)\left(\mia+ \RX\right).
\end{align*}
\end{theorem}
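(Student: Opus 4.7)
The plan is to deduce Theorem \ref{tw:bazaindukcji} as an almost immediate corollary of the general subgaussian estimate \eqref{eq:loctw1} in Theorem \ref{prop:oszapodgaus}, by specializing the auxiliary set $T$ to a single element. Concretely, I would take $T=\{t_0\}\subset \eR^{n^2}$ with $t_{0,ij}\equiv 1$, so that every supremum over $T$ becomes trivial and the left-hand side of \eqref{eq:loctw1} collapses to exactly $\Ex \lv \AX \rv_{op}$.

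With this choice I then inspect the four terms on the right-hand side of \eqref{eq:loctw1}. First, since each $X_{ij}$ is symmetric and hence centered, the term
$\sup_{v,w \in B_2}\Ex \sup_{t\in T}\sum_{ij}a_{ij}X_{ij}t_{ij}v_iw_j$
reduces to $\sup_{v,w}\sum_{ij}a_{ij}v_iw_j \Ex X_{ij}=0$. Second, $\RTX$ becomes exactly $\RX$. Third, by the symmetry of $A$ we have $\MTI=\MTJ=\mia$. Fourth, applying Lemma \ref{lem:oszmaxpodex} to the matrix $(a_{ij}X_{ij})_{i,j\leq n}$ yields $\Ex \max_{ij}|a_{ij}X_{ij}|\lesssim \RX$. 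Plugging these four identifications into \eqref{eq:loctw1} and noting that the subgaussian constant $C$ is absolute (so $\lesssim^C$ collapses to $\lesssim$) produces precisely the bound $\Ex \lv \AX \rv_{op}\lesssim \Log^{3/2}(d_A)(\mia+\RX)$.

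The point that needs a small amount of care, and what I would flag as the only real (if minor) obstacle, is avoiding circularity when reading off the bound. One would be tempted to use the cleaner inequality \eqref{eq:loctw2}, but its right-hand side contains $\sup_{t\in T}\lv \AT \rv_{op}$, which for $T=\{t_0\}$ is precisely the quantity we are trying to bound. It is therefore essential to work with \eqref{eq:loctw1} instead, where this contribution is replaced by $\MTI+\MTJ$; the symmetry of $A$ then turns this into the purely deterministic term $\mia$, allowing the estimate to close. Everything else is a direct verification, so there is no substantive difficulty beyond choosing the right specialization and checking that the vanishing of the mean of $X_{ij}$ kills the one potentially awkward term.
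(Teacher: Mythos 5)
Your proof is correct and is essentially the paper's own argument: the paper likewise invokes Theorem \ref{prop:oszapodgaus} with $T$ the singleton consisting of the all-ones matrix, notes that the first term vanishes, that $\RTX$ becomes $\RX$ and $\MTI=\MTJ=\mia$ by symmetry of $A$, and bounds $\Ex\max_{ij}|a_{ij}X_{ij}|\lesssim \RX$ via Lemma \ref{lem:oszmaxpodex}. Your caution about using \eqref{eq:loctw1} rather than \eqref{eq:loctw2} is well placed, since for singleton $T$ the term $\sup_{t\in T}\lv \AT \rv_{op}$ is the deterministic norm $\lv A \rv_{op}$, which in general is not controlled by $\mia+\RX$ (though it is not literally the random quantity $\Ex\lv \AX \rv_{op}$ being bounded, as you phrase it).
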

\begin{proof}
The assertion follows by invoking Proposition \ref{prop:oszapodgaus} with $T=\{(1)_{i,j \leq n}\}$ ($T$ contains only one matrix with ones everywhere). It is enough to observe that,  by Lemma \ref{lem:oszmaxpodex}
\[\Ex  \max_{ij} |a_{ij} X_{ij}| \lesssim \RX . \]
\end{proof}
The main idea of the proof of Theorem \ref{tw:glow} is that if $\wym \in \eN$, and the random variables $(X_{ij})_{i,j \leq n}$ satisfy \eqref{eq:*} with $\kappa=2^{\wym/2}$, then each $X_{ij}$ is a product of at most $\wym$ subgaussian random variables. We will condition on a certain group of them and then apply the induction assumption. Proposition \ref{prop:oszapodgaus} will handle the induction step. The key point is the analysis of the function
\begin{equation*}%\label{eq:deffunkN}
    N^X_{ij}(t)=-\ln \Pro(|X_{ij}|\geq t).
\end{equation*}
It turns out that if $X_{ij}$ satisfies \eqref{eq:*}, then $N^X_{ij}$ has certain asymptotics.
\begin{lemma}\cite[Lemma 3.1]{ja}\label{lem:wlasnosciN}
   Let $X$ be any random variable from the $SRV(\kappa)$ class such that $\Ex |X|=1$. Let $N(t)=-\ln \Pro(|X|\geq t)$. Then there exists $C(\kappa)$ such that for any $x,t\geq 1$ we have $N(C(\kappa)tx)\geq t^{\frac{1}{\Log_2 \kappa}}N(x)$. In particular, $N(C(\kappa)t)\geq t^{\frac{1}{\Log_2 \kappa}}$ for $t\geq 1$.
\end{lemma}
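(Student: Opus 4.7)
The plan is to apply Markov's inequality at a carefully chosen moment exponent, using the polynomial moment growth forced by the SRV condition. Iterating $\lv X \rv_{2p}\leq\kappa\lv X \rv_p$ from $\lv X \rv_{1}=\Ex|X|=1$ yields $\lv X \rv_{2^k}\leq\kappa^{k}$ and, by interpolating between dyadic indices, $\lv X \rv_p\leq\kappa p^{r}$ for every $p\geq 1$, where $r=\log_{2}\kappa$. This is the only distributional input I need. The ``in particular'' assertion then comes essentially for free, in the spirit of Fact~\ref{fa:oszog}: Markov at moment $p=t^{1/r}$ with threshold $s=e\kappa t$ gives $\Pro(|X|\geq e\kappa t)\leq(\kappa p^{r}/(e\kappa t))^{p}=e^{-t^{1/r}}$, which is exactly $N(e\kappa t)\geq t^{1/r}$ for $t\geq 1$.

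For the main inequality I would assume $N(x)\geq 1$ (the easy range $N(x)\leq 1$ reduces to the particular case by monotonicity of $N$ and $x\geq 1$) and set $p:=t^{1/r}N(x)\geq 1$. Markov at that exponent yields
\[
N(C(\kappa)tx)\;\geq\;p\ln\!\Bigl(\tfrac{C(\kappa)tx}{\lv X \rv_p}\Bigr),
\]
so the claim $N(C(\kappa)tx)\geq p$ reduces to $\lv X \rv_p\leq C(\kappa)tx/e$. The moment bound gives $\lv X \rv_p\leq\kappa p^{r}=\kappa tN(x)^{r}$, and the whole problem collapses to the auxiliary reverse Weibull-type estimate
\[
N(x)\leq C_{1}(\kappa)\,x^{1/r}\qquad\text{whenever }\Pro(|X|\geq x)>0.
\]

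The main obstacle is this auxiliary upper bound on $N$, as it goes in the opposite direction to Fact~\ref{fa:oszog} and is not a direct consequence of Markov. The plan is to extract it via Paley--Zygmund combined with the SRV moment equivalence: for any $p\geq 1$,
\[
\Pro\!\bigl(|X|\geq\tfrac12\lv X \rv_p\bigr)\;\geq\;(1-2^{-p})^{2}\Bigl(\tfrac{\lv X \rv_p}{\lv X \rv_{2p}}\Bigr)^{2p}\;\geq\;\frac{1}{4\kappa^{2p}},
\]
so $N(\tfrac12\lv X \rv_p)\leq 2p\ln\kappa+O(1)$. For $x\geq 1$ with $\Pro(|X|\geq x)>0$, one picks $p$ so that $\tfrac12\lv X \rv_p$ is comparable to $x$ and then verifies $p\lesssim^\kappa x^{1/r}$ by combining the layer-cake lower bound $\lv X \rv_{N(x)}\geq x/e$ with the relative moment growth $\lv X \rv_{sp}\leq\kappa s^{r}\lv X \rv_{p}$. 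A variable-$\theta$ version of Paley--Zygmund handles the boundary regime $x\in(\lv X \rv_{\infty}/2,\lv X \rv_{\infty}]$, where the ratio $\lv X \rv_{2p}/\lv X \rv_{p}$ collapses to $1$ as $p\to\infty$, improving the Paley--Zygmund denominator correspondingly. Plugging the auxiliary bound back into the Markov step completes the proof with $C(\kappa)$ polynomial in $\kappa$.
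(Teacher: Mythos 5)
Your Markov step and the ``in particular'' part are fine, but the statement to which you reduce everything --- the auxiliary bound $N(x)\le C_1(\kappa)\,x^{1/r}$ whenever $\Pro(|X|\ge x)>0$ --- is false for the class SRV$(\kappa)$, and this is a genuine gap, not a boundary technicality. Take $\kappa\ge 3$ and let $X$ be symmetric with $|X|\in\{a,M\}$, $\Pro(|X|=M)=q$, $M=a\sqrt{\kappa}$, and $a=(1-q+q\sqrt{\kappa})^{-1}$ so that $\Ex|X|=1$. A direct check of $\Ex|X|^{2p}\le\kappa^{2p}(\Ex|X|^p)^2$ (splitting into the cases $qM^{2p}\le a^{2p}$ and $qM^{2p}>a^{2p}$) shows this variable lies in SRV$(\kappa)$ for \emph{every} $q\in(0,1/2]$, with $\kappa$ fixed. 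Yet $N(1)=\ln(1/q)$ is unbounded as $q\downarrow 0$, while $1^{1/r}=1$; more generally $N(x)=\ln(1/q)$ on all of $(a,M]$ while $x^{1/r}\le\sqrt{2}$ there. So no constant $C_1(\kappa)$ can work, and the failure occupies essentially the whole support, not just the window $(\lv X\rv_\infty/2,\lv X\rv_\infty]$ that you propose to patch with a variable-$\theta$ Paley--Zygmund argument. The same example shows why your plan of ``picking $p$ so that $\tfrac12\lv X\rv_p$ is comparable to $x$'' breaks down: $\lv X\rv_p$ can plateau strictly below $2x$, so such a $p$ need not exist, and the relative growth bound $\lv X\rv_{sp}\le\kappa s^{r}\lv X\rv_p$ only gives upper bounds on moments, so it cannot force $\lv X\rv_p$ to reach level $2x$ for $p\lesssim x^{1/r}$. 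The whole point of the constant $C(\kappa)$ in the lemma is to jump over exactly such flat stretches of $N$ (multiplicative gaps in the support of $|X|$, whose size is controlled by $\kappa$); an argument that only uses $C$ to absorb moment constants cannot see this.

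The repair is to aim at the correct intermediate statement, namely a bound on moments at level $N(x)$ rather than a pointwise bound on $N$: if $N(x)\ge\max(3,4\ln\kappa)$, apply your Paley--Zygmund inequality at $p=N(x)/(4\ln\kappa)\ge1$; if one had $\lv X\rv_p>2x$, then $N(x)\le N\bigl(\tfrac12\lv X\rv_p\bigr)\le 2p\ln\kappa+\ln 4=\tfrac12 N(x)+\ln 4$, a contradiction, so $\lv X\rv_{N(x)/(4\ln\kappa)}\le 2x$. Combining this with $\lv X\rv_{sp}\le\kappa s^{r}\lv X\rv_p$ (with $s=4\ln\kappa\, t^{1/r}$) gives $\lv X\rv_{t^{1/r}N(x)}\le 2\kappa(4\ln\kappa)^{r}tx$, and your Markov step then yields $N(C(\kappa)tx)\ge t^{1/r}N(x)$ in this regime; the regime $N(x)\le\max(3,4\ln\kappa)$ follows from your ``in particular'' estimate after enlarging $t$ by a $\kappa$-dependent factor, which can be absorbed into $C(\kappa)$ as you note. (The case $N(x)=\infty$ is trivial by monotonicity.) For comparison, the paper does not reprove this statement at all: it cites \cite[Lemma 3.1]{ja} for the main inequality, remarks that the restriction $\kappa=2^{\wym}$ there is immaterial, and deduces the ``in particular'' part from it via $N(e)\ge 1$; your direct Markov proof of that last part is correct and essentially equivalent.
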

\begin{proof}
 The first part of the Lemma was proven in \cite{ja} (Lemma 3.1 therein) for random variables that satisfy \eqref{eq:*} with $\kappa=2^\wym$ for some $\wym\in \eN$. However, this specific value of the parameter is irrelevant. For this reason, we skip the proof. The second part of the lemma follows from the first one and Markov's inequality, since
\[N(e)\geq -\ln \frac{\Ex |X|}{e}=1.\]
\end{proof}

The following Lemma is just a slight modification of the idea which was used in \cite{ja} (Lemma 3.3 therein).
\begin{lemma}\label{lem:konstrukcjazmiennych}
 Let $X$ be a symmetric random variable from the $SRV(2^{\wym/2})$ class such that $\Ex |X|=1$. Let $X^{(1)},\ldots,X^{(\wym)}$ be i.i.d. random variables, distributed as $\eps |X|^{1/\wym}$, where $\eps$  is the Rademacher random variable (a symmetric $\pm 1$ random variable) independent of $X$. Then, there exists a constant, $C=C(\wym)$ such that 
 \begin{enumerate}
     \item   $\Pro\left(\left|\prod_{k=1}^\wym X^{(k)}\right|\geq t\right) \geq  \Pro(|X|\geq C(\wym)t) $            %$C(|X|+1)\geq |X^{(1)}\cdots X^{(\wym)}|$,
     \item    $ \Pro(|X|\geq t) \geq \Pro\left(\left|\prod_{k=1}^\wym X^{(k)}\right|\geq C(\wym) t \right)$,                      %$C(|X^{(1)} \cdots X^{(\wym)} |+1)\geq |X|$,
     \item $X^{(1)},\ldots,X^{(\wym)}$ are $C(\wym)$-subgaussian,
  %   \item $M(t)= N^X(t^d)$, where $M(t)=-\Log \Pro(|X^{(1)}|\geq t)$,
     \item $\Ex |X^{(1)}|\rown ^\wym 1$.
 \end{enumerate}
\end{lemma}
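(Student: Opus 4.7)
The lemma asserts that a random variable $X$ in the class SRV($2^{\wym/2}$) admits a decomposition, in distribution and up to constants depending on $\wym$, as a product of $\wym$ independent subgaussian factors. The plan is to treat items (3) and (4) by direct moment computations, and items (1) and (2) by comparing tails using the multiplicative growth property of $N^X(t)=-\ln\Pro(|X|\ge t)$ provided by Lemma \ref{lem:wlasnosciN}, which in our setting $\kappa=2^{\wym/2}$ reads $N^X(C(\kappa)tx)\ge x^{2/\wym}N^X(t)$ for $t,x\ge 1$.

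For items (3) and (4) I would exploit the identity $\lv X^{(1)}\rv_p=\lv |X|^{1/\wym}\rv_p=\lv X\rv_{p/\wym}^{1/\wym}$. The hypothesis $\lv X\rv_{2s}\le 2^{\wym/2}\lv X\rv_s$ with $s=q/\wym$ yields $\lv X^{(1)}\rv_{2q}\le \sqrt{2}\lv X^{(1)}\rv_q$ for $q\ge \wym$, which by Theorem \ref{thm:podgausschar} characterises $X^{(1)}$ as subgaussian (the regime $q<\wym$ is absorbed into the constant $C(\wym)$, using that the low moments of $X^{(1)}$ are controlled by $\lv X\rv_\wym^{1/\wym}\lesssim^\wym 1$ from the SRV moment growth). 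For item (4), Jensen gives $\Ex|X|^{1/\wym}\le(\Ex|X|)^{1/\wym}=1$, while the lower bound follows from Paley--Zygmund applied to $|X|$: the SRV condition gives $\Ex X^2\lesssim^\wym 1$, hence there exist $t_0(\wym),\delta(\wym)>0$ with $\Pro(|X|\ge t_0)\ge \delta$, and therefore $\Ex|X|^{1/\wym}\ge t_0^{1/\wym}\delta$.

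For items (1) and (2) I would transfer everything to the function $N^X$. For (1), the event $\{\prod_k|X^{(k)}|\ge t\}$ contains the intersection $\bigcap_k\{|X^{(k)}|\ge t^{1/\wym}\}$, so $\Pro(\prod_k|X^{(k)}|\ge t)\ge e^{-\wym N^X(t)}$ by independence; to conclude (1) it then suffices to pick $C(\wym)$ so that $N^X(C(\wym)t)\ge \wym N^X(t)$ for $t\ge 1$, which by Lemma \ref{lem:wlasnosciN} holds with $C(\wym)=C(\kappa)\wym^{\wym/2}$. For (2), a union bound gives $\Pro(\prod_k|X^{(k)}|\ge s)\le \wym\Pro(|X|\ge s)$; inserting $s=C(\wym)t$ it remains to verify $\wym\Pro(|X|\ge C(\wym)t)\le \Pro(|X|\ge t)$, i.e. $N^X(C(\wym)t)\ge N^X(t)+\ln\wym$. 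Since $N^X(t)\ge 1$ for $t\ge C(\kappa)$ by the second clause of Lemma \ref{lem:wlasnosciN}, a further application of the multiplicative growth with $x$ large enough that $x^{2/\wym}\ge 1+\ln\wym$ closes the argument.

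The main obstacle I anticipate is handling the small-$t$ regime in (1) and (2), since Lemma \ref{lem:wlasnosciN} only applies once both of its arguments exceed $1$. The remedy is to enlarge $C(\wym)$ so that for the residual range of $t$ either $\Pro(|X|\ge C(\wym)t)\ge \Pro(\prod_k|X^{(k)}|\ge t)$ follows trivially from monotonicity (since the right-hand side is bounded by $1$), or the claim becomes void because the left-hand side equals $1$; the uniform lower bounds on $\Pro(|X|\ge 1)$ inherited from $\Ex|X|=1$ and $\Ex X^2\lesssim^\wym 1$ make this manoeuvre quantitative. The sign factors $\prod_k\varepsilon_k$ play no role in these tail comparisons, as (1)--(3) only concern $|X^{(k)}|$.
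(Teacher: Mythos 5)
Your proposal is essentially the paper's own proof: items (1)--(2) are handled exactly as in the paper (independence of the events $\{|X^{(k)}|\ge t^{1/\wym}\}$ for the lower bound, a union bound for the upper bound, both converted via Lemma \ref{lem:wlasnosciN} into $N(C(\wym)t)\ge \wym N(t)$ resp.\ $N(C(\wym)t)\ge N(t)+\ln \wym$ for $t\ge 1$), and (3)--(4) differ only cosmetically (the paper deduces subgaussianity from $\lv X\rv_p\le C p^{\wym/2}$ via Fact \ref{fa:oszmomwyk} rather than the doubling condition and Theorem \ref{thm:podgausschar}, and gets the lower bound in (4) from moment comparison for the subgaussian $X^{(1)}$ rather than Paley--Zygmund; both variants are fine). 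One caveat: your proposed repair of the small-$t$ regime is both unnecessary and, for item (1), cannot work --- the tail comparison is only needed (and only proved in the paper) for $t\ge 1$, since Lemma \ref{lem:porsup} requires domination only above a fixed threshold $M$, and for $t\to 0$ item (1) genuinely fails for every choice of $C(\wym)$ whenever $\Pro(X=0)>0$ (which the SRV condition permits), because the left-hand side tends to $\Pro(|X|>0)^{\wym}$ while the right-hand side tends to $\Pro(|X|>0)$.
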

\begin{proof}
Let $N(t)=-\ln \Pro(|X|\geq t)$. Observe that for $t\geq 1$
\begin{align*}
    \Pro\left(\left|\prod_{k=1}^\wym X^{(k)}\right|\geq t\right) &\geq    \prod_{k=1}^\wym \Pro\left(\left| X^{(k)}\right|\geq t^{1/\wym}\right)=\left(\Pro(|X|\geq t)\right)^\wym=e^{-\wym N(t)}\geq e^{-N(C\wym^{\wym/2}t)}\nonumber \\
    &=\Pro(|X|\geq C\wym^{\wym/2}t),%\label{eq:kaczor}
\end{align*}
where in the last inequality we used Lemma \ref{lem:wlasnosciN}. The same lemma implies that, for $t\geq 1$ and a large enough constant $C$  (which depends only on $\wym$)
\begin{align*}
    \Pro\left(\left|\prod_{k=1}^\wym X^{(k)}\right|\geq C^\wym t \right)&\leq \wym\Pro\left(|X^{(1)}|\geq Ct^{1/\wym}\right)= e^{\log \wym - N(C^\wym t)}\leq e^{ -\frac{N(C^\wym t)}{2}} \leq e^{-N(t)}\nonumber \\
    &=\Pro(|X|\geq t).%\label{eq:donald}
\end{align*}
 Since $X$ satisfies \eqref{eq:*} with $\kappa=2^{\wym/2}$ and $\Ex |X|=1$, it is easy to see that
\[\forall_{p\geq 1} \lv X \rv_p \leq Cp^{\wym/2}.\]
Thus, by Fact \ref{fa:oszmomwyk}
\[\Ex e^{c(\wym)(X^{(1)})^2}=\Ex e^{c(\wym)|X|^{2/\wym}} <\infty. \]
Hence $X^{(1)},\ldots,X^{(\wym)}$ are subgaussian with a constant $C(\wym)$.  Finally, by Jensen's inequality
\[\Ex |X^{(1)}|= \Ex |X|^{1/\wym} \leq \left( \Ex |X|\right)^{1/\wym}=1. \]
Using a similar argument, ($X^{(1)}$ is $C(\wym)$ subgaussian so it satisfies \eqref{eq:*} with $\kappa(\wym)$)
\[\Ex |X^{(1)}|\gtrsim^\wym  \lv X^{1} \rv_\wym= (\Ex|X|)^{1/\wym}=1.\]
\end{proof}
We now show that, in our setting, the random variables $X_{ij}$ can be replaced
by the products of subgaussian random variables.
\begin{corollary}\label{cor:zmienzmienne}
Let $(X_{ij})_{i,j \leq n}$ be independent random variables belonging to the 
SRV$(2^{\wym/2})$ class, normalized so that $\Ex |X_{ij}| = 1$ for each $i,j \leq n$.
Let $(X^{(k)}_{ij})_{k \leq d,\; i,j \leq n}$ be the independent random variables
associated with $(X_{ij})_{i,j \leq n}$ and constructed in Lemma~\ref{lem:konstrukcjazmiennych}.
Then, for any matrix $(a_{ij})_{i,j \leq n}$ and any $p \geq 1$,
    \begin{equation}\label{eq:pormom}
        \lv \sum_{ij} a_{ij} X_{ij} \rv_p \rown^\wym \lv \sum_{ij} a_{ij} \prod_{k=1}^\wym X^{(k)}_{ij}   \rv_p.
    \end{equation}
    Moreover, for any set $S\subset \eR^{n^2}$
    \begin{equation}\label{eq:loc12}
       \Ex \sup_{s\in S} \sum_{ij} s_{ij} X_{ij}  \rown^\wym \Ex \sup_{s\in S} \sum_{ij} s_{ij} \prod_{k=1}^\wym X^{(k)}_{ij}.   
    \end{equation}
\end{corollary}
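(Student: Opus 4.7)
The plan is to derive both \eqref{eq:pormom} and \eqref{eq:loc12} as direct, two-sided applications of Lemma~\ref{lem:porsup}, with the tail comparison between $X_i$ and $Y_i := \prod_{k=1}^{\wym} X^{(k)}_i$ being furnished by parts (1) and (2) of Lemma~\ref{lem:konstrukcjazmiennych}. The entire argument amounts to running Lemma~\ref{lem:porsup} in both directions and keeping track of the $\wym$-dependent constants; no new estimates are needed beyond what Lemma~\ref{lem:konstrukcjazmiennych} already packages.

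First I would check that $Y_i$ is symmetric: writing $X^{(k)}_i = \eps_{i,k}|X^{*}_{i,k}|^{1/\wym}$ for independent Rademachers $\eps_{i,k}$, the product $Y_i$ carries the factor $\prod_k \eps_{i,k}$, which is itself Rademacher and independent of $|Y_i|$; the $X_i$ are symmetric by the definition of the \Slev class. With symmetry in hand, part (1) of Lemma~\ref{lem:konstrukcjazmiennych} can be rewritten as
\[\Pro(|X_i|\geq t) \leq \Pro(|C(\wym) Y_i| \geq t) \quad \text{for all } t\geq C(\wym),\]
so Lemma~\ref{lem:porsup} applied to the pair $(X_i),(C(\wym)Y_i)$ with threshold $M = C(\wym)$ delivers $\lv \sum_i a_i X_i \rv_p \lesssim^\wym \lv \sum_i a_i Y_i \rv_p$. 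Here I would use part (4) of Lemma~\ref{lem:konstrukcjazmiennych} together with the independence of the $X^{(k)}_i$'s to conclude that $\min_i \Ex|C(\wym)Y_i| = C(\wym)\prod_k \Ex|X^{(k)}_i|$ is bounded below by a positive constant depending only on $\wym$, so that the multiplicative factor $1+M/\min_i \Ex|\cdot|$ produced by Lemma~\ref{lem:porsup} is itself a constant depending only on $\wym$.

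For the reverse direction, part (2) of Lemma~\ref{lem:konstrukcjazmiennych} rewrites analogously as $\Pro(|Y_i|\geq t) \leq \Pro(|C(\wym)X_i|\geq t)$ for $t \geq C(\wym)$, and a second application of Lemma~\ref{lem:porsup} (with the roles of its $X_i$ and $Y_i$ swapped) yields $\lv \sum_i a_i Y_i \rv_p \lesssim^\wym \lv \sum_i a_i X_i \rv_p$; now $\min_i \Ex|C(\wym) X_i| = C(\wym)$ by the normalization $\Ex|X_i|=1$, so the factor from Lemma~\ref{lem:porsup} is again an absolute constant. The very same two applications of Lemma~\ref{lem:porsup} establish \eqref{eq:loc12}, because that lemma delivers the supremum and moment estimates under identical hypotheses. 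The real analytic content has already been extracted in Lemma~\ref{lem:konstrukcjazmiennych}, so there is no substantive obstacle: the role of the present corollary is simply to repackage those pointwise tail bounds as comparisons of $L_p$-norms and expected suprema via the contraction principle. The only small point to monitor is that, in both directions, the threshold $M = C(\wym)$ appearing in Lemma~\ref{lem:porsup} is divided by a quantity bounded below by a constant depending only on $\wym$, which it is by part (4) and the normalization respectively.
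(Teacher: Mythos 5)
Your proposal is correct and follows exactly the paper's route: the paper also deduces both \eqref{eq:pormom} and \eqref{eq:loc12} by applying Lemma~\ref{lem:porsup} in both directions, with the tail comparisons of parts (1)--(2) of Lemma~\ref{lem:konstrukcjazmiennych} and the normalization/part (4) controlling the threshold-to-mean factor. Your write-up simply makes explicit the rescaling by $C(\wym)$ and the symmetry of the product variables, which the paper leaves implicit.
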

\begin{proof}
Equations \eqref{eq:pormom} and \eqref{eq:loc12} are both simple consequences of Lemmas \ref{lem:porsup} and \ref{lem:konstrukcjazmiennych}.
\end{proof}

We will now prove Theorem \ref{tw:glow} in the special case when the random variables $(X_{ij})_{i,j \leq n}$ are the products of $\wym$ independent $C(\wym)$ subgaussian random variables.
\begin{lemma}\label{lem:twierdzeniedlaproduktow}
 Let $r \in \mathbb{N}$. Assume that $(X^{(k)}_{ij})_{k \leq \wym,\; i,j \leq n}$ 
are independent $C(\wym)$ subgaussian random variables such that 
$\Ex |X^{(k)}_{ij}| \sim^{\wym} 1$ for each $k \leq \wym$ and $i,j \leq n$.
Then, for any symmetric matrix $(a_{ij})_{i,j \leq n}$,
we have
    \begin{equation}\label{eq:zalinduk}
\Ex \sup_{v,w \in B_2} \sum_{ij} a_{ij} \left( \prod_{k=1}^\wym \Xk_{ij}\right) v_i w_j \lesssim^\wym \left(\Log(d_A)\right)^{C(\wym)}\left(\mia+R_{\Xld}(A)\right),
    \end{equation}
    where $X^{(\leq l)}_{ij}:=\prod_{k=1}^l \Xk_{ij}$ for $l=1,\ldots,\wym$, and we recall
    \[R_{X^{(\leq l)}}(A)=R_{X^{(\leq l)}}(A,\Log \, n)=\sup_{v,w\in B_2} \lv \sum_{ij} a_{ij} \left( \prod_{k=1}^l \Xk_{ij}\right) v_i w_j \rv_{\Log \, n}. \]
\end{lemma}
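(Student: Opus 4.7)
The proof is by induction on $\wym$, with Theorem \ref{tw:bazaindukcji} serving as the base case $\wym=1$ and as the engine for the inductive step. For $\wym\geq 2$, set $\tilde X_{ij}:=X^{(\leq \wym-1)}_{ij}$, so $X^{(\leq \wym)}_{ij}=\tilde X_{ij}X^{(\wym)}_{ij}$. Conditioning first on $\tilde X$, the family $(X^{(\wym)}_{ij})$ is independent, symmetric, and $C(\wym)$-subgaussian, while the coefficients $a_{ij}\tilde X_{ij}$ become deterministic.

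Since $(a_{ij}\tilde X_{ij})$ is not symmetric even though $A$ is, one first embeds it in a $2n\times 2n$ symmetric block: let
\[
 B:=\begin{bmatrix} 0 & A\ast\tilde X\\ (A\ast\tilde X)^T & 0\end{bmatrix},
\]
a matrix (deterministic given $\tilde X$) with zero diagonal and maximum graph degree at most $d_A$. Populate both off-diagonal blocks with two independent copies of $(X^{(\wym)}_{ij})$ to form a random matrix $Y$. The operator norm of a block matrix $\bigl[\begin{smallmatrix}0&P\\Q&0\end{smallmatrix}\bigr]$ equals $\max(\lv P\rv_{op},\lv Q\rv_{op})$, and both blocks of $B\ast Y$ are identically distributed (conditional on $\tilde X$), whence $\Ex^{X^{(\wym)}}\lv (a_{ij}\tilde X_{ij}X^{(\wym)}_{ij})\rv_{op}\leq \Ex^Y\lv B\ast Y\rv_{op}$. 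Applying Theorem \ref{tw:bazaindukcji} conditionally on $\tilde X$ then yields
\[
 \Ex^Y\lv B\ast Y\rv_{op}\lesssim^\wym \Log^{3/2}(d_A)\Bigl(\max_{I\leq 2n}\lv (B_{IJ})_J\rv_2+R_Y(B)\Bigr).
\]

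It remains to take $\Ex^{\tilde X}$ and bound both terms by $(\Log d_A)^{C(\wym-1)}(M(A)+R_{X^{(\leq \wym)}}(A))$. Since the maximum row and maximum column $\ell_2$-norms of any matrix are dominated by its operator norm, $\max_{I\leq 2n}\lv (B_{IJ})_J\rv_2\leq \lv A\ast\tilde X\rv_{op}$. The induction hypothesis, applied to the symmetric matrix $A$ with the $(\wym-1)$-fold product, gives $\Ex\lv A\ast\tilde X\rv_{op}\lesssim^\wym (\Log d_A)^{C(\wym-1)}(M(A)+R_{X^{(\leq \wym-1)}}(A))$, and Lemma \ref{lem:turbojensen} together with $\Ex|X^{(\wym)}_{ij}|\sim^\wym 1$ yields $R_{X^{(\leq \wym-1)}}(A)\lesssim^\wym R_{X^{(\leq \wym)}}(A)$. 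For $R_Y(B)$, decomposing any $V,W\in B_2(2n)$ into its $n+n$ block structure and applying the triangle inequality reduces matters to $R_{X^{(\wym)}}(A\ast\tilde X)$ (up to constants and the harmless change $\Log 2n\sim \Log n$); Jensen's inequality then gives $\Ex^{\tilde X}R_{X^{(\wym)}}(A\ast\tilde X)\leq R_{X^{(\leq \wym)}}(A)$ by commuting $\Ex^{\tilde X}$ first past the supremum in $(v,w)$ and then, via concavity of $(\cdot)^{1/p}$, past the $p$-norm in $X^{(\wym)}$.

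Multiplying by $\Log^{3/2}(d_A)$ closes the induction with $C(\wym)=C(\wym-1)+\tfrac{3}{2}$. The principal obstacle is the repeated loss of symmetry under the factorization $X^{(\leq \wym)}=\tilde X\cdot X^{(\wym)}$: since $\tilde X_{ij}\neq\tilde X_{ji}$, the intermediate matrix $A\ast\tilde X$ is never symmetric, which forces a block-symmetrization both in the conditional subgaussian step and in the inductive call. Because each such embedding preserves $d_A$, $M(A)$, and $R(A)$ up to universal constants, the final bound is of the required form.
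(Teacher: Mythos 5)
There is a genuine gap at the decisive step of your inductive argument. After conditioning on $\tilde X=X^{(\leq \wym-1)}$ and applying the subgaussian base theorem, you must control $\Ex^{\tilde X} R_{X^{(\wym)}}(A\ast\tilde X)=\Ex^{\tilde X}\sup_{v,w\in B_2}\bigl(\Ex^{X^{(\wym)}}\bigl|\sum_{ij}a_{ij}\tilde X_{ij}X^{(\wym)}_{ij}v_iw_j\bigr|^{\Log n}\bigr)^{1/\Log n}$, and you claim this is at most $R_{X^{(\leq\wym)}}(A)$ ``by commuting $\Ex^{\tilde X}$ first past the supremum in $(v,w)$'' and then using concavity. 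Jensen indeed gives the desired inequality for each \emph{fixed} pair $(v,w)$, but the expectation over $\tilde X$ sits \emph{outside} the supremum over $(v,w)$, while in $R_{X^{(\leq\wym)}}(A)$ the supremum is outermost; the interchange you need is $\Ex\sup\leq\sup\Ex$, which goes in the wrong direction and is false in general. Bounding an expected supremum of the random function $(v,w)\mapsto\lv\sum_{ij}a_{ij}\tilde X_{ij}X^{(\wym)}_{ij}v_iw_j\rv_{L^{\Log n}(X^{(\wym)})}$ by the supremum of its expectation is essentially the whole difficulty of the theorem, so this step cannot be waved through; without it your induction does not close.

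This is precisely why the paper's proof conditions in the opposite order and needs heavier machinery. There, one conditions on the \emph{new} factor $X^{(\wym+1)}$ and applies the induction hypothesis to the random coefficient matrix $A\odot X^{(\wym+1)}$ (whose support graph is still $G_A$, so $d_{A\odot X^{(\wym+1)}}\leq d_A$). The conditional moment quantity $R_{X^{(\leq\wym)}}(A\odot X^{(\wym+1)})$ is then rewritten, via the two-sided moment formula of Theorem \ref{tw:formulamomenty} (equation \eqref{eq:wlasnoscT}), as a \emph{deterministic} supremum over a $1$-unconditional set $T$, so the problematic term becomes $\Ex\sup_{v,w\in B_2}\sup_{t\in T}\sum_{ij}a_{ij}t_{ij}X^{(\wym+1)}_{ij}v_iw_j$ --- an expected supremum of a subgaussian process over the enlarged index set $B_2\times B_2\times T$. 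Controlling that is exactly the role of Theorem \ref{prop:oszapodgaus} with an arbitrary extra supremum over $T$ (together with Lemma \ref{lem:rozbij} and \eqref{eq:suptpodmomentem}); your proposal implicitly assumes this control comes for free from Jensen, which it does not. The remaining ingredients of your sketch (block symmetrization, the bound on the row norms via the induction hypothesis, and $R_{X^{(\leq\wym-1)}}(A)\lesssim^\wym R_{X^{(\leq\wym)}}(A)$ via Lemma \ref{lem:turbojensen}) are fine, but they do not repair this central step.
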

\begin{proof}
   Theorem \ref{tw:bazaindukcji} solves the case when $\wym=1$. Assume that the assertion holds for $\wym\geq 2$, i.e., \eqref{eq:zalinduk} is true. Let  $T\subset \eR^{n^2}$ be the set  for which \eqref{eq:wlasnoscT} holds for
\[(\Xld_{ij})_{i,j \leq n}=(\prod_{k=1}^\wym \Xk_{ij})_{i,j\leq n}\] 
in place of  $X_{ij}$. Such set exists, since random variables $(\Xld_{ij})_{i,j \leq n}$ belong to the SRV$(C 2^{\wym/2})$ class (this is an easy application of Theorem \ref{thm:podgausschar}). In our notation, this means that, for any matrix $M=(m_{ij})_{i,j \leq n}$, we have
\[R_{\Xld}(M) \rown^\wym \sup_{v,w\in B_2} \sup_{t\in T}\sum_{ij} m_{ij} t_{ij} v_i w_j. \]
In particular, for the random matrix  $M=A \odot \Xd:=(a_{ij}\Xd_{ij})_{i,j \leq n}$, we have
\begin{align}
    \Ex  R_{\Xld}(A \odot \Xd)  \rown^\wym  \Ex\sup_{v,w\in B_2} \sup_{t\in T}\sum_{ij} a_{ij} t_{ij} \Xd_{ij} v_i w_j. \label{eq:locpikachu*}
\end{align}
If for some $i,j \leq n$ we have $a_{ij}=0$, then $(A \odot \Xd)_{ij}=0$ (the entry in the $i$-th row and $j$-th column of the matrix $A \odot \Xd$). As a result
\begin{equation}\label{eq:oszda}
d_{A \odot \Xd} \leq d_A.
\end{equation}
   We use the induction assumption i.e. \eqref{eq:zalinduk}, conditioned on the random matrix $=A \odot \Xd$ to upper bound
    \begin{align*}
      &\Ex \sup_{v,w \in B_2} \sum_{ij} a_{ij} \left( \prod_{k=1}^{\wym+1} \Xk_{ij}\right) v_i w_j = \Ex \sup_{v,w \in B_2} \sum_{ij} (A \odot \Xd)_{ij} \left( \prod_{k=1}^{\wym} \Xk_{ij}\right) v_i w_j  \\
      &\lesssim^\wym \Ex \Log^{C(\wym)}  \left(d_{A \odot \Xd} \right)  \left( \max_i \sqrt{\sum_j (A \odot \Xd)^2_{ij}}+ R_{\Xld}(A \odot \Xd) \right)\\
      &\lesssim^\wym \left(\Log(d_A)\right)^{C(\wym)} \left(\Ex \max_i \sqrt{\sum_j a^2_{ij} (\Xd_{ij})^2}+\Ex\sup_{v,w\in B_2} \sup_{t\in T}\sum_{ij} a_{ij} t_{ij} \Xd_{ij} v_i w_j \right),
    \end{align*}
    where the last line follows from \eqref{eq:locpikachu*} and \eqref{eq:oszda}. Since $\max_i \sqrt{\sum_j b^2_{ij}}\leq \lv (b_{ij})\rv_{op}$, Theorem \ref{tw:bazaindukcji} implies that
\begin{align*}
   \Ex \max_i \sqrt{\sum_j a^2_{ij} (\Xd_{ij})^2} \lesssim \Log^{3/2} (d_A)\left( R_{X^{\wym+1}}(A) +\mia\right).
\end{align*}
 The above can be shown directly by a simple argument (even without the $\Log(d_A)$ part). We refer interested readers to \cite[Proposition 4.4]{latswiat}).
By Lemmas \ref{lem:turbojensen} and then \ref{lem:zmnienwspol} (we recall that $\Ex |\Xk_{ij}| \geq c(\wym)>0$)
\begin{align*}
 R_{X^{(\leq \wym+1)}}(A)&\geq \sup_{v,w\in B_2} \lv \sum_{ij} a_{ij}  \Xd_{ij} v_i w_j \left(\Ex  \prod_{k=1}^{\wym}|\Xk_{ij}|\right)\rv_{\Log \, n} \\
  &\gtrsim^\wym \sup_{v,w\in B_2} \lv \sum_{ij} a_{ij}  \Xd_{ij} v_i w_j \rv_{\Log \, n}= R_{X^{(\wym+1)}}(A).
\end{align*}
Hence, it is enough to show that
\begin{align}
 \Ex\sup_{v,w\in B_2} \sup_{t\in T}\sum_{ij} a_{ij} t_{ij} \Xd_{ij} v_i w_j  &\lesssim^\wym  \Log^{3/2}(d_A)R_{X^{\leq \wym+1}}(A).\label{eq:cel1} 
\end{align}
Theorem \ref{prop:oszapodgaus} (more precisely \eqref{eq:loctw2}) yields
\begin{multline}
    \Ex\sup_{v,w\in B_2} \sup_{t\in T}\sum_{ij} a_{ij} t_{ij} \Xd_{ij} v_i w_j \lesssim^\wym \Log^{(3/2)}(d_A) \Bigg(\sup_{v,w \in B_2} \lv \sup_{t\in T} \sum_{ij} a_{ij} t_{ij} \Xd_{ij} \rv_{\Log \, n}  \\
 +\sup_{t\in T} \lv \AT\rv_{op}+\Ex \sup_{t\in T} \max_{ij} |a_{ij}t_{ij} \Xd_{ij}|\Bigg).\label{eq:loc1a}
\end{multline}
By   \eqref{eq:suptpodmomentem}
\begin{align}
 \sup_{v,w\in B_2} \lv \sup_{t\in T} \sum_{ij} a_{ij} t_{ij} \Xd_{ij} v_i w_j \rv_{\Log \, n}\rown^\wym   \sup_{v,w\in B_2} \lv \sum_{ij} a_{ij} \prod_{k=1}^{\wym+1} \Xk_{ij} v_i w_j \rv_{\Log \, n}=R_{X^{(\leq \wym+1)}}(A).
 \label{eq:loc1b}
\end{align}

Again by \eqref{eq:wlasnoscT} and Lemmas \ref{lem:turbojensen} and then \ref{lem:zmnienwspol} (recalling that $\Ex |\Xd_{ij}| \geq c(\wym)>0$)
\begin{align}
\sup_{t\in T} \lv \AT \rv_{op}&= \sup_{v,w\in B_2}\sup_{t\in T} \sum_{ij} a_{ij}t_{ij}  v_iw_j \rown^\wym   \sup_{v,w\in B_2} \lv \sum_{ij} a_{ij} \prod_{k=1}^{\wym} \Xk_{ij} v_i w_j \rv_{\Log \, n} \nonumber \\
 &\lesssim^\wym \sup_{v,w\in B_2} \lv \sum_{ij} a_{ij} \prod_{k=1}^{\wym+1} \Xk_{ij} v_i w_j \rv_{\Log \, n}=R_{X^{(\leq \wym+1)}}(A).\label{eq:loc1d}
\end{align}

 Lastly, by Fact \ref{lem:oszmaxpodex} and \eqref{eq:wlasnoscT}
\begin{align}
    \Ex  \max_{ij}\sup_{t\in T} |a_{ij}t_{ij} \Xd_{ij}|&\rown^\wym \Ex \max_{ij} |a_{ij}||\Xd_{ij}| \lv \prod_{k=1}^\wym \Xk_{ij}\rv_{\Log \, n} \nonumber \\
    &\lesssim \max_{ij} \lv a_{ij} \Xd_{ij}\lv \prod_{k=1}^\wym \Xk_{ij}\rv_{\Log \, n}  \rv_{\Log \, n}=\max_{ij} \lv a_{ij}\prod_{k=1}^{\wym+1}\Xk_{ij}\rv_{\Log \, n} 
    \nonumber \\  % turbo rospisane powyzej\nonumber \\ &\lesssim \max_{ij} \sqrt[\Log \, n]{ \Ex^{\Xd} \left(|a_{ij}||\Xd_{ij}| \lv \prod_{k=1}^d \Xk_{ij}\rv_{\Log \, n}   \right)^{\Log \, n}  }\nonumber\\
    &\leq \sup_{v,w\in B_2 }\lv \sum_{ij} a_{ij}v_i w_j\prod_{k=1}^{\wym+1}\Xk_{ij}\rv_{\Log \, n}=R_{X^{\leq \wym+1}}(A) . \label{eq:loc1e}
\end{align}
Clearly, \eqref{eq:cel1} is a consequence of \eqref{eq:loc1a}-\eqref{eq:loc1e}.
 
\end{proof}

\begin{proof}[proof of Theorem \ref{tw:glow}]  
We may assume that the random variables $(X_{ij})_{ij}$ satisfy \eqref{eq:*} with $\kappa=2^{\wym/2}$ for certain $\wym\in \eN$.  Let $(\Xk_{ij})_{k\leq \wym;i,j\leq n}$ be the random variables constructed in Lemma \ref{lem:konstrukcjazmiennych}.  Using Corollary \ref{cor:zmienzmienne} and then Lemma \ref{lem:twierdzeniedlaproduktow}, we obtain
    \begin{multline*}
\Ex \lv \AX\rv_{op} =\Ex \sup_{v,w \in B_2} \sum_{ij} a_{ij}X_{ij}v_iw_j \rown^\wym \Ex \sup_{v,w \in B_2} \sum_{ij} a_{ij}\left( \prod_{k=1}^\wym \Xk _{ij}\right)v_iw_j \\
\lesssim^\wym  
    \left(\Log(d_A)\right)^{C(\wym)}\left(\mia+R_{\Xld}(A)\right).
    \end{multline*}

   By Corollary \ref{cor:zmienzmienne}
    \[R_{\Xld}(A)\rown^\wym \RX,\]
    so \eqref{eq:twglownezda} follows.
\end{proof}

%\appendix

%\section{Appendix}
%XX

%\section{Glossary}
%XX

%\addcontentsline{toc}{chapter}{Bibliography}

\begin{biblist}
\begin{bibdiv}
\bibliographystyle{amsplain}

\bib{adlat}{article}{
   author={Adamczak, Rados{\l}aw},
   author={Lata{\l}a, Rafa{\l}},
   title={Tail and moment estimates for chaoses generated by symmetric
   random variables with logarithmically concave tails},
 %  language={English, with English and French summaries},
   journal={Ann. Inst. Henri Poincar\'e Probab. Stat.},
   volume={48},
   date={2012},
   number={4},
   pages={1103--1136},
  % issn={0246-0203},
  %review={\MR{3052405}},
  % doi={10.1214/11-AIHP441},
}

\bib{mac:los}{book}{
   author={Anderson, Greg W.},
   author={Guionnet, Alice},
   author={Zeitouni, Ofer},
   title={An introduction to random matrices},
   series={Cambridge Studies in Advanced Mathematics},
   volume={118},
   publisher={Cambridge University Press, Cambridge},
   date={2010},
   pages={xiv+492},
   isbn={978-0-521-19452-5},
 %  review={\MR{2760897}},
}

\bib{latop}{article}{
   author={Lata{\l}a, Rafa{\l}},
   title={On the spectral norm of Rademacher matrices},
   journal={arXiv:2405.13656},
%%%   volume={7},
   date={2024},
   %number={5},
   %pages={936--953},
 %  issn={1016-443X},
   %review={\MR{1475551}},
%   doi={10.1007/s000390050031},
}

\bib{strzelec}{article}{
   author={Lata\l a, Rafa\l },
   author={Strzelecka, Marta},
   title={Comparison of weak and strong moments for vectors with independent
   coordinates},
   journal={Mathematika},
   volume={64},
   date={2018},
   number={1},
   pages={211--229},
  % issn={0025-5793},
 %  review={\MR{3778221}},
  % doi={10.1112/S0025579317000432},
}

\bib{latswiat}{article}{
   author={Lata\l a, Rafa\l},
   author={\'Swi\polhk atkowski, Witold},
   title={Norms of randomized circulant matrices},
   journal={Electron. J. Probab.},
   volume={27},
   date={2022},
   pages={Paper No. 80, 23},
   %review={\MR{4441144}},
   %doi={10.1214/22-ejp799},
}

\bib{lathan}{article}{
   author={Lata\l a, Rafa\l},
   author={van Handel, Ramon},
   author={Youssef, Pierre},
   title={The dimension-free structure of nonhomogeneous random matrices},
   journal={Invent. Math.},
   volume={214},
   date={2018},
   number={3},
   pages={1031--1080},
  % issn={0020-9910},
  % review={\MR{3878726}},
%   doi={10.1007/s00222-018-0817-x},
}

\bib{proinban}{book}{
   author={Ledoux, Michel},
   author={Talagrand, Michel},
   title={Probability in Banach spaces},
   series={Ergebnisse der Mathematik und ihrer Grenzgebiete (3) [Results in
   Mathematics and Related Areas (3)]},
   volume={23},
 %  note={Isoperimetry and processes},
   publisher={Springer-Verlag, Berlin},
   date={1991},
 %  pages={xii+480},
 %  isbn={3-540-52013-9},
%   review={\MR{1102015}},
  % doi={10.1007/978-3-642-20212-4},
}

\bib{d2}{article}{
   author={Meller, Rafa\l },
   title={Tail and moment estimates for a class of random chaoses of order
   two},
   journal={Studia Math.},
   volume={249},
   date={2019},
   number={1},
   pages={1--32},
   %issn={0039-3223},
  % review={\MR{3984282}},
 %  doi={10.4064/sm170819-2-5},
}

\bib{ja}{article}{
   author={Meller, Rafa\l },
   title={Two-sided moment estimates for a class of nonnegative chaoses},
   journal={Statist. Probab. Lett.},
   volume={119},
   date={2016},
   pages={213--219},
%   issn={0167-7152},
%  review={\MR{3555289}},
 %  doi={10.1016/j.spl.2016.08.005},
}

\bib{seg}{article}{
   author={Seginer, Yoav},
   title={The expected norm of random matrices},
   journal={Combin. Probab. Comput.},
   volume={9},
   date={2000},
   number={2},
   pages={149--166},
 %  issn={0963-5483},
 %  review={\MR{1762786}},
  % doi={10.1017/S096354830000420X},
}

\bib{tal}{book}{
   author={Talagrand, Michel},
   title={Upper and lower bounds for stochastic processes---decomposition
   theorems},
   series={Ergebnisse der Mathematik und ihrer Grenzgebiete. 3. Folge. A
   Series of Modern Surveys in Mathematics [Results in Mathematics and
   Related Areas. 3rd Series. A Series of Modern Surveys in Mathematics]},
   volume={60},
   note={Second edition [of  3184689]},
   publisher={Springer, Cham},
   date={[2021] \copyright 2021},
 %  pages={xviii+726},
 %  isbn={978-3-030-82594-2},
  % isbn={978-3-030-82595-9},
  % review={\MR{4381414}},
 %  doi={10.1007/978-3-030-82595-9},
}

\end{bibdiv}

\end{biblist}

\end{document}